\newtheorem{theorem}{Theorem}[section]
\newtheorem{prop}[theorem]{Proposition}
\newtheorem{lemma}[theorem]{Lemma}
\newtheorem{cor}[theorem]{Corollary}
\newtheorem{ksprop}{KS-property}
\newtheorem{obs}{Observation}
\theoremstyle{definition}
\newtheorem{exa}[theorem]{Example}
\newtheorem*{dfn}{Definition}
\theoremstyle{remark}
\newtheorem{rmk}[theorem]{Remark}
\numberwithin{equation}{section}
\def\bar{\overline}
\def\sub{\subseteq}
\def\onto{\twoheadrightarrow}
\newcommand{\isoto}{\overset{\sim}{\longrightarrow}}
\def\b{{\mathfrak b}}
\def\g{{\mathfrak g}}
\def\h{{\mathfrak h}}
\def\l{{\mathfrak l}}
\def\m{{\mathfrak m}}
\def\p{{\mathfrak p}}
\def\t{{\mathfrak t}}
\def\u{{\mathfrak u}}
\def\z{{\mathfrak z}}
\def\gl{{\mathfrak{gl}}}
\def\sl{{\mathfrak{sl}}}
\def\so{{\mathfrak{so}}}
\def\sp{{\mathfrak{sp}}}
\def\GL{{\mathrm{GL}}}
\def\SL{\mathrm{SL}}
\def\SO{\mathrm{SO}}
\def\Sp{\mathrm{Sp}}
\def\ab{\mathrm{ab}}
\def\op{\mathrm{op}}
\def\reg{{\mathrm{reg}}}
\def\tr{\mathrm{tr}}
\def\C{{\mathbb C}}
\def\O{{\mathbb O}}
\def\Z{{\mathbb Z}}
\def\cD{{\mathcal D}}
\def\cE{{\mathcal E}}
\def\cR{{\mathcal R}}
\def\cS{{\mathcal S}}
\def\cVA{{\mathcal{VA}}}
\def\sA{{\mathsf{A}}}
\def\sB{{\mathsf{B}}}
\def\sC{{\mathsf{C}}}
\def\sD{{\mathsf{D}}}
\def\sS{\mathscr{S}}
\def\ad{\operatorname{ad}}
\def\ch{\operatorname{ch}}
\def\Ann{\operatorname{Ann}}
\def\Comp{\operatorname{Comp}}
\def\End{\operatorname{End}}
\def\gr{\operatorname{gr}}
\def\Ind{\operatorname{Ind}}
\def\Lie{\operatorname{Lie}}
\def\lmod{\operatorname{-mod}}
\def\Prim{\operatorname{Prim}}
\def\rig{\operatorname{rig}}
\def\Spec{\operatorname{Spec}}
\def\Wh{\operatorname{Wh}}
\def\bi{\text{\boldmath$i$}}
\def\bj{\text{\boldmath$j$}}
\newcommand{\arxiv}[1]{{\tt arXiv:#1}}
\title{On induced completely prime primitive ideals in enveloping algebras of classical Lie algebras}
\author{Simon M.~Goodwin, Lewis Topley and Matthew Westaway}
\address{School of Mathematics, University of Birmingham, Birmingham, B15 2TT, UK}	
\email{S.M.Goodwin@bham.ac.uk}
\address{Department of Mathematical Sciences, University of Bath, Bath, BA2 7AY, UK}	
\email{lt803@bath.ac.uk, mw2915@bath.ac.uk}
	\thanks{Mathematics Subject Classification (2000 revision). Primary 17B45, 17B10. Secondary 17B08.}
\begin{document}

\begin{abstract}
	A distinguished family of
	completely prime primitive ideals in the universal enveloping algebra of a reductive Lie algebra $\g$ over $\C$
	are those ideals constructed from one-dimensional representations of finite $W$-algebras.  We
	refer to these ideals as {\em Losev--Premet ideals}. For $\g$ simple of classical type, we prove that for
	a Losev--Premet ideal $I$ in $U(\g)$,
	there exists a Losev--Premet ideal $I_0$ for a certain
	Levi subalgebra $\g_0$ of $\g$ such that the associated variety of $I_0$ is the closure of a rigid nilpotent orbit in $\g_0$ and
	$I$ is obtained from $I_0$ by parabolic induction; in turn, this gives a classification of rigid Losev-Premet ideals in $U(\g)$. This is deduced from the corresponding statement about one-dimensional representations of finite
	$W$-algebras.
\end{abstract}

\maketitle

\section{Introduction}
Let $G$ be a connected reductive algebraic group over $\C$ and let $\g = \Lie G$. The study of
the spectrum $\Prim U(\g)$ of primitive ideals in the universal enveloping algebra $U(\g)$ of $\g$ is
one of the most venerable topics in representation theory.
These primitive ideals serve as a tractable approximation to the irreducible representations of $U(\g)$
and have generated an enormous amount of interest from leading mathematicians.  Amongst the primitive
ideals of $U(\g)$, an
especially important role is played by the spectrum $\Prim^1 U(\g)$ of completely prime primitive ideals. This importance is largely due to the close relationship between these ideals and the unitary representations of Lie groups (see \cite{LMM} and the references therein).

The classification of $\Prim U(\g)$ as a set was established in the early 1980s following important
contributions from Barbasch--Vogan, Duflo, Joseph and others; we refer to \cite{JoCl} and
\cite{JoICM} for an overview with references.
Despite the classification of $\Prim U(\g)$ being established 40 years ago, the classification problem
for $\Prim^1 U(\g)$ has proved to be very challenging and remains open.  A classification is known
in the case $G$ is of type $\sA$, where it was established by M{\oe}glin in \cite{Moe}.  There is also
work dealing with some small rank cases, for instance the work of Borho in \cite{BoB2} for $G$
of type $\sB_2$, and there have been many further important
contributions to the general classification problem. We refer to \cite[\S1.2]{PT} for some discussion and references.

A general paradigm in classification problems in Lie theoretic representation theory is
to use a process of parabolic induction. For the case of $\Prim^1 U(\g)$,
parabolic induction of ideals, as recalled in \S\ref{ss:primitive}, can be used.
Of importance for this approach is the theorem of Conze in \cite{Co}, which states that
the induced ideal of a completely prime primitive ideal is itself a completely prime primitive ideal.
Thus to classify completely prime primitive ideals a key problem is to determine the
so-called {\em rigid} completely prime primitive ideals, namely those that cannot be non-trivially obtained
by parabolic induction.  The main result of this paper
solves this problem, in the case $G$ is of classical type,
for an important family of completely prime primitive ideals
that arise naturally from the theory of finite $W$-algebras.
Before stating this result in Theorem~\ref{T:maintheorem}, we need to recall
some more background.

Following the classification of $\Prim U(\g)$, an important problem was to
determine the associated variety $\cVA(I)$ of a primitive ideal $I$; the definition
of $\cVA(I)$ is recalled in \S\ref{ss:primitive}. This problem was
made precise by Joseph in \cite{JoAV}: 
by identifying $\g$ with its dual via a nondegenerate invariant symmetric bilinear form,
$\cVA(I)$ is viewed as a closed subvariety of $\g$, and the main result of {\it op. cit.} states that $\cVA(I)$ is the closure $\bar \O_e$ of the $G$-orbit $\O_e$
of some nilpotent element $e \in \g$.  This leads to the decomposition $\Prim U(\g)  = \bigsqcup \Prim_{\O} U(\g)$,
where the (finite, disjoint) union is taken over the
nilpotent $G$-orbits $\O$  in $\g$ and $\Prim_{\O} U(\g) : = \{I \in \Prim U(\g) \mid \cVA(I) = \bar \O \}$.
Consequently, problems regarding $\Prim U(\g)$ can broken down to problems
about $\Prim_{\O} U(\g)$, and those about $\Prim^1 U(\g)$  can be reduced
to $\Prim^1_{\O} U(\g) := \Prim^1 U(\g) \cap \Prim_{\O} U(\g)$.

The introduction of finite $W$-algebras to the mathematical
literature by Premet in \cite{PrST} led to a resurgence
in interest in the primitive ideals of $U(\g)$.  In \S\ref{S:Walg}, we recall
some background on the finite $W$-algebra $U(\g,e)$ associated to a nilpotent element $e \in \g$.
In particular, we recall the close
relationship between $\Prim_{\O_e} U(\g)$ and the finite dimensional irreducible representations of the finite $W$-algebra
$U(\g,e)$.
This gives an approach to $\Prim_{\O_e} U(\g)$ via the representation theory of $U(\g,e)$, which
has led to spectacular progress.  For instance, it leads to a resolution of the problem discussed in \cite{BJ}
that $\Prim U(\g)$ can be described as a countable union of varieties; the work of Losev in \cite{LoGR}
made a breakthrough in the
programme proposed by Joseph in \cite{JoGR} to determine Goldie rank polynomials; and the longstanding
problem that $\Prim^1_{\O_e} U(\g) \ne \varnothing$ has been
resolved, with the proof completed by Premet in \cite{PrMF}.

The theory of finite $W$-algebras picks out a distinguished class of completely prime
primitive ideals, namely those corresponding to one-dimensional representations of finite $W$-algebras.  We
name these ideals {\em Losev--Premet} ideals, reflecting the major progress on their theory made in the work
of both Losev and Premet.
Thanks to \cite[Theorem~1.2.2]{LoQS} every
Losev--Premet ideal coming from a one-dimensional representation of $U(\g,e)$ lies in $\Prim^1_{\O_e} U(\g)$.
It is now well-established that the Losev--Premet ideals form an important
family in $\Prim^1_{\O_e} U(\g)$.  For instance, it is proved in \cite[Corollary 1.2]{LoQuant} that
for primitive ideals with integral central
character, the Losev--Premet ideals give all completely prime primitive ideals (with a possible exception
for $G$ of type ${\sf E}_8$); it is known in type {\sf A} that this statement holds without the restriction on central character (see the remarks following \cite[Theorem~B]{Pr11}); it is proved in \cite[Theorem 1.2]{To} that, for $G$ of classical type, the Losev--Premet ideals
are precisely the ideals in the image of Losev's orbit map introduced in \cite{LoOM}; and
it is expected that the
left and right annihilators of unitary Harish-Chandra bimodules for $U(\g)$ are Losev--Premet ideals,
see \cite[Conjecture~6.3.1]{LMM}.  We note that it is in general false that every
ideal in $\Prim^1 U(\g)$ is a Losev--Premet ideal and refer to the work of Losev--Panin in \cite{LP21}
for an important recent development regarding this defect.

We now state our main theorem, which constitutes major progress in our understanding of Losev-Premet ideals and, as explained after its statement,
completes the classification of rigid Losev--Premet ideals for $G$ of classical type.
In the statement we refer to Lusztig--Spaltenstein induction of nilpotent orbits, which is recapped in
\S\ref{ss:LSind}.

\begin{theorem} \label{T:maintheorem}
	Let $G$ be a simple algebraic group over $\C$ of classical type and let
	$\O$ be a nilpotent orbit in $\g = \Lie G$.
	Let $I \sub U(\g)$ be a Losev--Premet ideal with $\cVA(I) = \bar{\O}$.
	Then there exists a Levi subalgebra $\g_0$ of $\g$, a parabolic subalgebra $\p$ of $\g$ with Levi factor $\g_0$, a rigid nilpotent
	orbit $\O_{0}$ in $\g_0$, and a Losev--Premet ideal $I_0 \sub U(\g_0)$ such that
	$\O$ is obtained from $\O_{0}$
	by Lusztig--Spaltenstein induction, $\cVA(I_0) = \bar \O_{0}$,
	and $I$ is
	induced from $(\p, I_0)$.
\end{theorem}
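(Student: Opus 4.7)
The strategy is to lift the statement to the level of finite $W$-algebras, prove the corresponding assertion there using the classification of one-dimensional representations available in classical types, and translate back.

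\emph{Reduction to $W$-algebras.} Write $\O = \O_e$ for a nilpotent $e \in \g$. By definition, $I = I(\eta)$ for some one-dimensional representation $\eta$ of $U(\g,e)$, and via Skryabin's equivalence $I$ is the annihilator in $U(\g)$ of the Whittaker module attached to $\eta$. Losev's theory of parabolic induction for finite $W$-algebras provides, for any Levi $\g_0 \sub \g$ with nilpotent $e_0 \in \g_0$ satisfying $\O_e = \Ind_{\g_0}^{\g} \O_{e_0}$, a functor sending one-dimensional $U(\g_0,e_0)$-modules to one-dimensional $U(\g,e)$-modules and compatible with parabolic induction of two-sided ideals in the enveloping algebras. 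It therefore suffices to show that every one-dimensional representation of $U(\g, e)$ is obtained by such parabolic induction from a one-dimensional representation of $U(\g_0, e_0)$ for some Levi $\g_0$ with $e_0$ rigid in $\g_0$; the rigidity of the resulting ideal $I_0$ then follows because further non-trivial parabolic induction on the $W$-algebra side is blocked exactly when the underlying orbit $\O_{e_0}$ is rigid.

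\emph{Combinatorial analysis in classical type.} In classical types, one-dimensional representations of $U(\g, e)$ admit an explicit combinatorial parameterisation in terms of pyramids/partitions, via the shifted Yangian presentation in type $\sA$ (Brundan--Kleshchev, Premet) and the shifted twisted Yangian presentation in types $\sB, \sC, \sD$ (Brown, together with prior work of the authors). Rigidity of a nilpotent orbit in classical type is likewise controlled by combinatorial conditions on its Jordan type. Given $\eta$ with associated combinatorial datum, the plan is to strip off its ``removable parts''---those parts which can be absorbed into type $\sA$ Levi factors by parabolic induction---to obtain a reduced datum attached to a rigid nilpotent $e_0$ in a canonical Levi $\g_0$. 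The one-dimensional $U(\g_0, e_0)$-module $\eta_0$ corresponding to this reduced datum should then recover $\eta$ under Losev's parabolic induction, and $\cVA(I(\eta_0)) = \bar \O_{e_0}$ by construction, with $\O_{e_0}$ rigid in $\g_0$.

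\emph{Main obstacle.} The formal reduction to $W$-algebras rests on known compatibilities. The substantive difficulty lies in the combinatorial matching: identifying, from the pyramid or partition datum for a one-dimensional representation of $U(\g, e)$, a canonical Levi $\g_0$ together with nilpotent $e_0 \in \g_0$ such that (a) the reduced datum does indeed determine a one-dimensional $U(\g_0, e_0)$-module, (b) $\O_{e_0}$ is rigid in $\g_0$, and (c) Losev's induction functor recovers $\eta$ from $\eta_0$. In type $\sA$ this follows rather cleanly from the Brundan--Kleshchev description. In types $\sB, \sC, \sD$ the classification of one-dimensional representations of shifted twisted Yangians splits into several families, and one must verify that the ``rigid core'' construction is well-defined for each family and behaves correctly under parabolic induction; matching the combinatorics of Levi restriction of $W$-algebras with that of the parameterising data is the principal technical obstacle.
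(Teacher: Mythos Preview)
Your reduction to the surjectivity of Losev's parabolic induction map on one-dimensional $W$-algebra representations is correct and is exactly what the paper does; this is the content of Theorem~\ref{T:surj} and the deduction of Theorem~\ref{T:maintheorem} from it via the commutative square~\eqref{e:diag}.

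Where you diverge substantially from the paper is in the proof of that surjectivity. You propose to use an explicit combinatorial parameterisation of $\cE(\g,e)$ via (twisted) Yangian presentations and to construct, for each $\eta$, a ``rigid core'' $\eta_0$ by stripping removable pieces. The paper does \emph{not} attempt any such explicit construction. Instead it argues indirectly: the map $\Phi$ is finite, hence closed, so it suffices to show that the multiset of dimensions of irreducible components of $\bigsqcup_i \cE(\g_i,e_i)$ coincides with that of $\cE(\g,e)$, and that distinct components of the source land in distinct components of the target. The first is obtained by passing through the Katsylo variety $e+X$ (Proposition~\ref{P:dimpresbij}), using the component bijection $\Comp\cE(\g,e)\isoto\Comp(e+X)$ from \cite{To} together with Im~Hof's irreducibility of $(e+\g^f)\cap\cS_i$. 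The second (Proposition~\ref{P:Int}) is proved by a central-character argument that reduces to a purely combinatorial statement about Levi subalgebras: for distinct rigid induction data $[\g_1,\O_1]$ and $[\g_2,\O_2]$ of the same orbit one cannot have $\g_1\sub\g_2$ up to conjugacy (Theorem~\ref{T:nonconj}), which is established via the Kempken--Spaltenstein algorithm.

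Your route is not obviously wrong, but the obstacle you flag is genuine and, as far as the literature stands, unresolved in the form you need. A complete combinatorial labelling of $\cE(\g,e)$ in types $\sB$, $\sC$, $\sD$ that is fine enough to read off Levi restriction and to verify compatibility with Losev's induction functor is not available; the results you allude to give presentations of $U(\g,e)$ but not a usable parameterisation of its one-dimensional modules together with the behaviour under induction. The paper's indirect dimension-counting argument is designed precisely to bypass this: it never needs to match individual representations, only to control components and their dimensions. The price is the nontrivial Theorem~\ref{T:nonconj}; the payoff is that no explicit description of $\cE(\g,e)$ beyond its component structure is required.
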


This theorem implies that the rigid Losev--Premet ideals, for $G$ of classical type, are precisely those
whose associated variety is the closure of a rigid nilpotent orbit.
By \cite[Theorem~1]{PT} it is known that, for $G$ simple of classical type
and $\O$ a rigid nilpotent orbit in $\g$, there is a unique Losev--Premet ideal $I$ with $\cVA(I) = \bar \O$.
Moreover, the highest weight of a simple highest weight $U(\g)$-module with annihilator $I$ can be
determined using \cite[Proposition 8.2.3]{LMM}.  Thus Theorem~\ref{T:maintheorem} completes
the classification of rigid Losev--Premet ideals for $G$ of classical type.

Theorem~\ref{T:maintheorem} can be viewed as an extension
of \cite[Theorem\ 5]{PT} and our methods are a novel development of the methods there,
whilst also making use of recent results from \cite{To}.
The proof deduces the theorem from the corresponding statement for finite $W$-algebras.
In Theorem~\ref{T:surj} we prove that, with the set up in Theorem~\ref{T:maintheorem},
any one-dimensional $U(\g,e)$-module can be obtained
from a one-dimensional $U(\g_0,e_0)$-module
via the parabolic induction functor introduced by Losev in \cite{LoPI}.

We give now
a brief overview of the strategy of the proof of Theorem~\ref{T:surj}. First we need to establish some notation. The variety of one-dimensional $U(\g,e)$-modules is denoted $\cE(\g,e)$, see \eqref{e:Ege} for a formal definition.  We say that
$(\g_0,\O_{0})$ is a rigid induction datum for $(\g,\O)$ if $\O$ is induced from $\O_{0}$
and $\O_0$ is rigid in $\g_0$. Parabolic induction for finite $W$-algebras gives a finite morphism $\cE(\g_0,e_0) \to \cE(\g,e)$
for any rigid induction datum $(\g_0,\O_{e_0})$ for $(\g,\O_e)$.  In Proposition
\ref{P:dimpresbij} it is shown that the
multiset of dimensions of irreducible components of the disjoint union $\bigsqcup \cE(\g_i,\O_{e_i})$
over the ($G$-orbits of) rigid induction data $(\g_i,\O_{e_i})$ for $(\g,\O_e)$ is
equal to the multiset of dimensions of irreducible components
of $\cE(\g,e)$.  Combining this with Proposition~\ref{P:Int}, which shows that distinct
irreducible components of $\bigsqcup \cE(\g_i,e_i)$ are mapped by the parabolic induction morphism
to distinct irreducible components of $\cE(\g,e)$, we can deduce Theorem~\ref{T:surj}.  An important result
along the way is Theorem~\ref{T:nonconj}, which shows that for distinct rigid induction data
$(\g_1,\O_{e_1})$ and $(\g_2,\O_{e_2})$ for $(\g,\O_e)$ we have $\g_1 \not\sub \g_2$ and $\g_2 \not\sub \g_1$;
we refer also to Remark~\ref{R:nonconj} for the extension of this to any reductive $G$.

We remark that the computer calculations reported in \cite{BG} show that the analogue of
Theorem~\ref{T:maintheorem}  for $G$ of exceptional type is not true.  Those calculations show that there
are Losev--Premet ideals in $U(\g)$ for $G$ of type $F_4$ or $E_6$, with associated variety a non-rigid
nilpotent orbit, that are not parabolically induced.  It is not known
whether there are such ideals for $G$ of type $E_7$ or $E_8$, though this seems quite plausible.

To end this introduction we draw attention to the remarkable recent work of  Losev in \cite{LoOM} and
Losev--Mason-Brown--Matvieievskyi in
\cite{LMM}.  An orbit method for Lie algebras was introduced by Losev in \cite[\S5]{LoOM},
which gives a natural embedding from the set of coadjoint orbits of $G$ to $\Prim U(\g)$.
It was conjectured that the ideals obtained in this way for classical Lie algebras are precisely the Losev--Premet ideals,
and as noted above this has now been proved by the second author in \cite{To}.  In \cite[Theorem 7.8.1]{LMM}, Losev's orbit method is extended to define
an injection from isomorphism classes of $G$-equivariant covers of coadjoint orbits to isomorphism classes of
$G$-equivariant filtered quantizations of affinizations of $G$-equivariant covers of nilpotent orbits.
Each of these quantizations gives rise to a primitive ideal, completing Vogan's orbit method program, which in turn has important applications
to the theory of unipotent representations of Lie groups.  Furthermore, their methods give another approach
to understanding Losev--Premet ideals via birationally rigid induction data, see for instance \cite[Remark 8.1.2]{LMM}.

This paper is organised as follows. In \S\ref{S:redgrps} we recall preliminaries on reductive groups, Levi subgroups, nilpotent orbits, Lusztig-Spaltenstein induction, sheets and primitive ideals; we also use this section to introduce notation used throughout the paper. Then, in \S\ref{S:LSinduction} we prove Theorem~\ref{T:nonconj} by means of a combinatorial argument involving the Kempken-Spaltenstein algorithm. In \S\ref{S:Walg} we introduce finite $W$-algebras and establish the framework needed to state Theorem~\ref{T:surj}; the paper concludes in \S\ref{S:FWACLA} with the proof of this theorem.

\subsection*{Acknowledgements}
We would like to thanks Sasha Premet for a useful comment on the first version of this paper. We would also like to thank the referees for their careful reading and helpful comments on the paper. All three authors were supported during this research by the EPSRC grant EP/R018952/1. The
research of the second author is supported by a UKRI Future Leaders Fellowship, grant numbers MR/S032657/1, MR/S032657/2, MR/S032657/3.
The third author gratefully acknowledges funding from the Royal Commission for the Exhibition of 1851.

%All three authors were supported during this research by the EPSRC grant EP/R018952/1. The
%research of the second author is supported by a UKRI Future Leaders Fellowship, grant numbers MR/S032657/1, MR/S032657/2, MR/S032657/3.
%The third author gratefully acknowledges funding from the Royal Commission for the Exhibition of 1851.

\section{Reductive groups, Levi subgroups, nilpotent orbits, Lusztig--Spaltenstein induction, sheets and primitive ideals}
\label{S:redgrps}

\subsection{Reductive groups} \label{ss:notn}

Throughout this paper $G$ is a connected reductive algebraic group over $\C$.
We often specialize to the case where $G$ is simple of classical type, by which we
mean $G$ is one of $\SL_n(\C)$, $\SO_N(\C)$, or $\Sp_{2n}(\C)$,
where $n,N \in \Z_{>0}$.
The results of our paper do not depend on the isogeny class of $G$, which justifies
the restriction to these choices of $G$ for types $\sA$, $\sB$, $\sC$ and $\sD$.
As we work entirely over $\C$ in this paper we allow
ourselves to abbreviate notation and just write $\SL_n$, $\SO_N$, or $\Sp_{2n}$ for
these groups;
we write $\sl_n$, $\so_N$ and $\sp_{2n}$ for their Lie algebras.
Explicitly we take $\SO_N = \{x \in \SL_N \mid x^{\tr}J_Nx = J_N\}$, where 
$$
J_N = \left( \begin{array}{ccc}  &  & 1 \\ & \iddots & \\ 1 & & \end{array} \right)
$$ 
is the $N \times N$ matrix with 
$(i,j)$ entry equal to $\delta_{i,N+1-j}$; and we take $\Sp_{2n} = \{x \in \SL_{2n} \mid x^{\tr}\tilde J_{2n}x = \tilde J_{2n}\}$
where $\tilde J_{2n}$ has block form 
$$
\tilde J_{2n} = \left( \begin{array}{cc} 0 & J_n \\ - J_n & 0 \end{array} \right).
$$ Note that for these choices of $J_N$ and $\tilde J_{2n}$, the resulting algebraic groups have the property that the subgroup of diagonal matrices forms a maximal torus and the subgroup of upper triangular matrices forms a Borel subgroup.

For a closed subgroup $H$ of $G$ we write $\h = \Lie H$ for the Lie algebra of $H$ and $\z(\h)$
for the centre of $\h$.
We let $(\cdot\,,\cdot)$ be a nondegenerate $G$-invariant symmetric bilinear form on $\g$, which allows
us to identify $\g \cong \g^*$ as $G$-modules. We have that $G$ acts on $\g$ via the adjoint action.  Let $x \in \g$, let $X \sub \g$ and let $H$ be a subgroup of $G$.
We write $H \cdot x$ for the $H$-orbit of $x$, though often use the notation $\O_x$ for $G \cdot x$.  The centralizer of $x$ in $H$ is denoted $H^x$ and
the centralizer of $x$ in $\h$ is denoted $\h^x$.
We write $H \cdot X$ for the $H$-saturation of
$X$, i.e.\ $H \cdot X = \bigcup_{x \in X} H \cdot x$.
The closure of $X$ with respect to the Zariski topology on $\g$ is denoted $\bar X$.

We fix a maximal torus $T$ of $G$ and a Borel subgroup $B$ of $G$ with $T \sub B$, and let $U$
denote the unipotent radical of $B$; as noted above, we may pick $T$ to consist of diagonal matrices and $B$ to consist of upper triangular matrices. We denote by $\t$, $\b$ and $\u$ the respective Lie algebras of these subgroups.
Let $\Phi$ be the root system of $G$ with respect to $T$, let $\Phi^+$ be the set
of positive roots determined by $B$ and let $\Pi$ be the corresponding set of simple roots.
We write $h_\alpha \in \t$ for the coroot corresponding to $\alpha \in \Phi$.

The Weyl group of $G$ with respect to $T$ is denoted $W$.
We choose
$\rho \in \t^*$ such that $\rho(h_\alpha)=1$ for all $\alpha\in \Pi$; for instance we could
take $\rho$ to be the half sum of the roots in $\Phi^+$.
The {\em dot action} of $W$ on $\t^*$ is
defined by $w \bullet \lambda := w(\lambda+\rho)-\rho$, and we note that this does not depend
on the choice of $\rho$.

For $\alpha \in \Phi$, we write $\g_\alpha \sub \g$ for the corresponding root subspace.
Given a closed subgroup $H$ of $G$ containing $T$, we define $\Phi(\h)$ to be the subset of $\Phi$ such that
$\h = \t \oplus \bigoplus_{\alpha \in \Phi(\h)} \g_\alpha$.  Given a subset $\Gamma$ of $\Pi$, we define
$\Phi_\Gamma$ to be the root subsystem of $\Phi$ generated by $\Gamma$, more precisely
$\Phi_\Gamma = \{\sum_{\alpha \in \Gamma} a_\alpha \alpha \in \Phi \mid a_\alpha \in \Z\}$.

\subsection{Parabolic subgroups and Levi subgroups} \label{ss:paralevi}

We recall some results about parabolic subgroups and their Levi factors that can be found, for example, in \cite[\S3.8]{CM}.

First we recap that a subgroup $P$
of $G$ is a parabolic subgroup if it contains a conjugate of $B$.
We say that $P$ is a standard parabolic subgroup
if $B \sub P$, so every parabolic subgroup is conjugate to a standard parabolic
subgroup.  For a standard parabolic subgroup $P$ with unipotent radical $U_P$, there is a subset $\Gamma$
of $\Pi$ such that $\Phi(\p) = \Phi_\Gamma \sqcup \Phi(\u_P)$, and we use the notation $P = P_\Gamma$.  For $\Gamma, \Delta \sub \Pi$, we have that
$P_\Gamma$ is conjugate to $P_\Delta$ if and only if $\Gamma = \Delta$. Thus
the conjugacy classes of parabolic subgroups of $G$ are in bijection with subsets of $\Pi$.

A parabolic subgroup $P$ of $G$ has a Levi decomposition $P = G_0 U_P$, where $U_P$ is the unipotent radical of $P$
and $G_0$ is a Levi factor.  By a Levi subgroup of $G$ we mean a Levi factor $G_0$ of some
parabolic subgroup, and we refer to $\g_0$ as a Levi subalgebra.

We recall that Levi subalgebras can equivalently be defined as centralizers of semisimple elements. More precisely,
for any semisimple $s \in \g$, we have that $\g^s$ is a Levi subalgebra of $\g$ and all Levi subalgebras
are of the form $\g^s$ for some semisimple $s \in \g$.  
In particular, this implies that if $\g_1$ and $\g_2$ are Levi subalgebras
of $\g$ with $\g_1 \sub \g_2$, then $\g_1$ is a Levi subalgebra of $\g_2$.

For a standard parabolic subgroup $P$ there is a unique Levi factor $G_0$ containing $T$,
which we refer to as a standard Levi subgroup, and then we refer
to $\g_0$ as a standard Levi subalgebra.  
We note that any Levi subgroup is
conjugate to a standard Levi subgroup.
For a standard Levi subgroup $G_0$, we have that
$\Phi(\g_0) = \Phi_\Gamma$ for some $\Gamma \sub \Pi$, and we use the notation $G_\Gamma$ for $G_0$.
For $\Gamma,\Delta \sub \Pi$
we have $G_\Gamma$ is conjugate to $G_\Delta$ if and only if there exists $w \in W$ such that $w \cdot \Gamma = \Delta$.

We make the following observation for later. Suppose that $G_1$ and $G_2$ are standard Levi subgroups of $G$ with Lie algebras $\g_1$ and $\g_2$ respectively, and that there exists a $G$-conjugate $\tilde \g_1$ of $\g_1$ such that $\tilde \g_1\subseteq \g_2$. Then $\tilde \g_1$ is a Levi subalgebra of $\g_2$, and thus is $G_2$-conjugate (and hence $G$-conjugate) to a standard Levi subalgebra $\widehat{\g}_1$ of $\g_2$. It is straightforward to see that standard Levi subalgebras of $\g_2$ are standard Levi subalgebras of $\g$. We therefore conclude that any inclusion of Levi subalgebras of $\g$ is $G$-conjugate to an inclusion of standard Levi subalgebras of $\g$.

For a Levi subgroup $G_0$, we define
$\z(\g_0)^\reg := \{x \in \z(\g_0) \mid (\g_0)^x = \g_0 \}$.  For $G_0 = G_\Gamma$ we have
$\z(\g_0) = \{x \in \t \mid \gamma(x) = 0 \text{ for all } \gamma \in \Gamma\}$ and
$\z(\g_0)^\reg = \{x \in \t \mid \gamma(x) = 0 \text{ for all } \gamma \in \Gamma \text{ and } \alpha(x) \ne 0 \text{ for all } \alpha \in \Phi \setminus \Phi_\Gamma\}$.

We explain the conjugacy classes of Levi subgroups for $G$ of classical type explicitly.
The $W$-orbits of subsets $\Gamma$ of $\Pi$ are determined by the Dynkin type of $\Phi_\Gamma$ with root lengths taken into account and
with care needed to not identify $\sD_2$ with $\sA_1 + \sA_1$ or
$\sD_3$ with $\sA_3$; except in the case where $G$ has type $\sD_n$ and $\Phi_\Gamma$ has type $\sA_{i_1-1}+\dots+\sA_{i_s-1}$ with all $i_j$ even and $i_1+\dots+i_s=n$, in which case there are two $W$-orbits.

Below we give these standard Levi subgroups explicitly, up to conjugacy, when $T$ is the maximal torus of diagonal matrices in $G$ and $B$ is the
Borel subgroup of upper triangular matrices in $G$.
To do this we define an {\em inc-sequence} to be a sequence $\bi = (i_1, \dots, i_s)$
with $i_j \in \Z_{>0}$, $i_1 \le \dots \le i_s$.
For an inc-sequence $\bi$ we define $|\bi| = i_1 + \dots + i_s$.

It is easier to explain the standard Levi subgroups
of $\GL_n$ than those of $\SL_n$, and these are of the form
\begin{equation} \label{e:levigl}
	\GL_{\bi} := \GL_{i_1} \times \dots \times \GL_{i_s},
\end{equation}
for $\bi$ an inc-sequence with $|\bi| = n$.   The standard Levi subgroups of $\SL_n$ are given by
\begin{equation} \label{e:levisl}
	\SL_\bi := \GL_\bi \cap \SL_n
\end{equation}
for $\bi$ an inc-sequence with $|\bi| = n$.
For $\bj$ an inc-sequence with $|\bj| = n$, we have that $\SL_{\bi}$ is conjugate to
$\SL_{\bj}$ if and only if $\bi = \bj$.

The standard Levi subgroups of $\Sp_{2n}$
are of the form
\begin{equation} \label{e:levisp}
	\Sp_{\bi,2n_\bi} := \GL_{i_1} \times \dots \times \GL_{i_s} \times \Sp_{2n_\bi},
\end{equation}
for $\bi$ an inc-sequence with $|\bi| \le n$, and with $n_\bi := n-|\bi|$.  For $\bj$ an inc-sequence with $|\bj| \le n$,
we have that $\Sp_{\bi,2n_\bi}$ is conjugate to
$\Sp_{\bj,2n_{\bj}}$ if and only if $\bi = \bj$.

The standard Levi subgroups of $\SO_N$
are of the form
\begin{equation} \label{e:leviso}
	\SO_{\bi,N_\bi} := \GL_{i_1} \times \dots \times \GL_{i_s} \times \SO_{N_\bi},
\end{equation}
where $\bi$ is an inc-sequence with $2|\bi| \le N$, and with $N_\bi := N-2|\bi| \ne 2$.
In the case that $N=2|\bi|$
and all $i_j$ are even, there are two conjugacy classes of Levi subgroups of the form $\SO_{\bi,0}$.
In all other cases, for an inc-sequence $\bj$ with $2|\bj| \le N$ and $N-2|\bj| \ne 2$,
we have that $\SO_{\bi,N_\bi}$ is conjugate to
$\SO_{\bj,N_{\bj}}$ if and only if $\bi = \bj$.
We note that the case $N_\bi = 2$ is excluded as in this case we have $\SO_2 \cong \GL_1$, and
then  $\SO_{\bi,2}$ is conjugate to $\SO_{\bar \bi,0}$,
where $\bar \bi$ is obtained from $\bi$ by appending a 1 to the start.

We write $\gl_{\bi}$, $\sl_{\bi}$, $\sp_{\bi,2n_{\bi}}$ and $\so_{\bi,N_\bi}$ for the Lie algebras of $\GL_\bi$, $\SL_{\bi}$, $\Sp_{\bi,2n_\bi}$ and $\SO_{\bi,N_\bi}$ respectively.

\subsection{Nilpotent orbits}

We refer to $G$-orbits of nilpotent elements in $\g$ as nilpotent $G$-orbits.
We recap the parameterization of nilpotent orbits for $G$ of classical type, as can be found
for instance in \cite[Chapter~5]{CM}.
To do this we define a {\em partition} $\lambda$ to be a sequence
$\lambda = (\lambda_1, \lambda_2, \dots )$ with finitely many non-zero $\lambda_i \in \Z_{\ge 0}$,
and $\lambda_i \ge \lambda_{i+1}$ for each $i$.
Often we write $\lambda = (\lambda_1,\dots,\lambda_r)$
where $r$ is maximal such that $\lambda_r \ne 0$.
We say that $\lambda$ is a partition of $|\lambda|:= \lambda_1 + \dots + \lambda_r$.

For $G = \SL_n$ the nilpotent orbits are classified by
their Jordan type, which is the partition $\lambda$ of $n$ giving the
sizes of the Jordan blocks of an element in the orbit.

For $G = \Sp_{2n}$ the nilpotent orbits are classified by
their Jordan type, as each nilpotent $\GL_{2n}$-orbit that
intersects $\sp_{2n}$ does so in a single $\Sp_{2n}$-orbit.
The nilpotent $\GL_{2n}$-orbits that have a non-empty intersection with $\sp_{2n}$ are those with Jordan type
$\lambda$, where $\lambda$ is a partition of $2n$ such that every odd part of $\lambda$ occurs with even multiplicity; we refer to such partitions
as {\em symplectic partitions}.

For $G = \SO_{N}$, there is a minor complication as the intersection of a $\GL_{N}$-orbit
with $\so_{N}$ may split into two $\SO_{N}$-orbits; such an intersection is
a single $\mathrm O_N$-orbit, but splits into two $\SO_N$-orbits if the centralizer in $\mathrm O_N$
of an element of the orbit is contained in $\SO_N$. The Jordan types $\lambda$ of the
$\GL_{N}$-orbits having non-empty intersection with $\so_N$  are those
such that every positive even part of $\lambda$ occurs with even multiplicity; we refer to such partitions
as {\em orthogonal partitions}.
This intersection is a single $\SO_{N}$-orbit
except when $\lambda$ is a very even partition, i.e.\ when all parts of $\lambda$ are even. For a very even partition $\lambda$, it is customary to label the two $\SO_{N}$-orbits by Roman numerals I and II.

For $G$ of classical type and $\lambda$ a partition corresponding to a nilpotent $G$-orbit in $\g$, we
use the notation $\O_\lambda$ for this nilpotent orbit.  When this notation is used it is always clear
from the context which $G$ is being considered, and in the case when $G = \SO_N$ and $\lambda$ is
a very even partition it refers to a fixed choice of one of the two orbits corresponding to this partition.

\subsection{Lusztig--Spaltenstein induction and rigid nilpotent orbits} \label{ss:LSind}

We recap Lusztig--Spaltenstein induction of nilpotent orbits as introduced in \cite{LS}.
In fact in \cite{LS} the theory was developed for unipotent classes, but is also
valid for nilpotent classes, see for instance \cite[Chapter 7]{CM}. We mention that this procedure has been extended from nilpotent orbits to equivariant covers of orbits in \cite[\textsection 2.3]{LMM}.

Let $G_0$ be a Levi subgroup of $G$ and choose a parabolic subgroup $P$ with $G_0$ as
a Levi factor.  Then we have the decomposition $\p = \g_0 \oplus \u_P$ where $\u_P$ is the
Lie algebra of the unipotent radical of $P$.
Given a nilpotent $G_0$-orbit $\O_0$, there is a unique nilpotent $G$-orbit which
has dense intersection with $\O_0 + \u_P$.  We denote this orbit by $\Ind_{\g_0}^\g(\O_0)$, and say that $\O$ is
{\em Lusztig--Spaltenstein induced} from $\O_0$. As the notation indicates, Lusztig--Spaltenstein induction depends only on the Levi subalgebra $\g_0$, and not on the parabolic
subalgebra $\p$ containing $\g_0$.

We note that $G$ acts on the set of pairs $(\g_0,\O_0)$, where $\g_0$ is the Lie algebra of a Levi subgroup $G_0$ of $G$
and $\O_0\subseteq \g_0$ is a nilpotent $G_0$-orbit.  We denote the $G$-orbit of $(\g_0,\O_0)$ by $[\g_0,\O_0]$ and note that
the induced orbit $\Ind_{\g_0}^\g(\O_0)$ does not depend on the choice of representative for this orbit.
We refer to $[\g_0,\O_0]$ as an {\em induction datum} for the induced orbit $\O = \Ind_{\g_0}^\g(\O_0)$.  Since there
are finitely many conjugacy classes of Levi subgroups of $G$ and finitely many nilpotent orbits for each
Levi subgroup, there are finitely many induction data.

We say that a nilpotent $G$-orbit $\O$ is {\em rigid} in $\g$ if it cannot be obtained by induction
from a nilpotent orbit in a proper Levi subalgebra. We say that the induction datum $[\g_0, \O_0]$ is {\em rigid}
if $\O_0$ is rigid in $\g_0$.	
For a fixed nilpotent $G$-orbit $\O$, we define the {\em set of rigid induction data for} $\O$
to be
$$
\cR(\g,\O) := \{[\g_0,\O_0] \mid [\g_0, \O_0] \text{ is a rigid induction datum for } \O\}.
$$

To state what the rigid induction data are, we need to know
the rigid nilpotent orbits in Levi subalgebras of $\g$.
For this we note that a nilpotent orbit $\O_0$ in a Levi subalgebra $\g_0$ of $\g$ is rigid if and only
if the intersection of $\O_0$ with any simple factor $\h$ of $\g_0$ is rigid in $\h$.  Below
we explain what the rigid induction data are for $G$ simple of classical type.  For this
we use the notation for Levi subgroups given in \eqref{e:levisl}, \eqref{e:levisp} and \eqref{e:leviso}.

For $G =\SL_n$, the only rigid nilpotent orbit is the zero orbit.
Thus the rigid induction data are $[\sl_\bi, \{0\}]$ for $\bi= (i_1,\dots,i_s)$ an inc-sequence
with  $|\bi| = n$.  We add here that it is well-known that $\Ind_{\sl_{\bi}}^{\sl_n}(\{0\}) = \O_{\bi^{\mathrm t}}$, where
$\bi^{\mathrm t}$ is the {\em dual} of $\bi$, defined by $(\bi^t)_j = |\{k \in \{1,\dots,s\} \mid i_k \ge j\}|$.
In particular, this implies that there is a unique rigid induction datum for each nilpotent orbit $\O$ in $\g = \sl_n$.

For $G = \Sp_{2n}$, a symplectic partition $\lambda$ of $2n$ is the Jordan type of a rigid nilpotent orbit if and only if
$\lambda_i - \lambda_{i+1} \le 1$ for all $i$
and even $\lambda_i$ do not have multiplicity $2$ in $\lambda$; we refer to such $\lambda$ as a {\em rigid symplectic partition}.
The rigid induction data are $[\sp_{\bi,2n_\bi}, \{0\} \times \O_0]$ for $\bi = (i_1,\dots,i_s)$ an inc-sequence
with $|\bi| \le n$ and $\O_0$ a rigid nilpotent orbit in $\sp_{2n_\bi}$.  To clarify notation we
explain that $\{0\} \times \O_0$ denotes the nilpotent $\Sp_{\bi,2n_\bi}$-orbit in $\sp_{\bi,2n_\bi}$
that projects to $\{0\}$ in each of the $\gl_{i_j}$ factors for $j = 1,\dots,s$ and projects to $\O_0$
in the $\sp_{2n_\bi}$ factor.

For $G = \SO_{N}$, an orthogonal partition $\lambda$ of $N$ is the Jordan type of a rigid nilpotent orbit if and only if
$\lambda_i - \lambda_{i+1} \le 1$ for all $i$
and odd $\lambda_i$ do not have multiplicity $2$ in $\lambda$; we refer to such $\lambda$ as a {\em rigid orthogonal partition}.
The rigid induction data are $[\so_{\bi,N_\bi}, \{0\} \times \O_0]$ for  $\bi = (i_1,\dots,i_s)$ an inc-sequence
with $2|\bi| \le N$ and $2|\bi| \ne N-2$, and $\O_0$ a rigid nilpotent orbit in $\so_{N_\bi}$.

\subsection{Sheets and decomposition classes} \label{ss:sheets}
We recap some aspects of the theory of sheets and decomposition classes.  The material we
cover here is contained in \cite{BK} and \cite{Bo}, we also refer to \cite[\S3.1]{PT}
for an overview with references.

For each induction datum $[\g_0, \O_0]$
the  corresponding {\em decomposition class} is
$$
\cD(\g_0, \O_0) := G\cdot (\z(\g_0)^{\reg} + \O_0).
$$
Decomposition classes give a stratification
$$
\g = \bigsqcup_{[\g_0,\O_0]} \cD(\g_0, \O_0)
$$
where the (disjoint) union is taken over the finitely many induction data for all nilpotent orbits in $\g$.

The {\em rank strata} of $\g$ are defined to be
$$
\g^{(j)} := \{x \in \g \mid \dim G \cdot x = j\}
$$
for $j \in \Z_{\ge 0}$.
The {\em sheets} of $\g$ are the irreducible components of the rank strata.

Each decomposition class $\cD$ is irreducible and there exists $j_\cD \in \Z_{\ge 0}$
such that $\cD \sub \g^{(j_\cD)}$. Therefore, each sheet
contains a  dense decomposition class.
Given a decomposition class $\cD$ we let $\bar{\cD}^{\reg} := \bar{\cD} \cap \g^{(j_\cD)}$
be the union of the orbits of maximal dimension in $\bar \cD$.
Given decomposition classes $\cD_1 := \cD(\g_1, \O_1),$ and $\cD_2 := \cD(\g_2, \O_2)$ we have
$\cD_1 \sub \bar {\cD}_2^{\reg}$ if and only if there are representatives $(\g_1, \O_1)$ and $(\g_2,\O_2)$ such that
$\g_2 \sub \g_1$ and $\O_1 = \Ind_{\g_2}^{\g_1}(\O_2)$.  From this we can deduce that
the sheets of $\g$ are precisely the $\bar{\cD}^{\reg}(\g_0, \O_0)$:=$\bar{\cD(\g_0,\O_0)}^{\reg}$ as we vary
over all rigid induction data $[\g_0, \O_0]$, and moreover that $\Ind_{\g_0}^\g \O_0$ is the unique nilpotent
orbit lying in $\bar{\cD}^{\reg}(\g_0, \O_0)$. Finally, we note for future reference that $\dim \bar{\cD}^{\reg}(\g_0, \O_0) = \dim\z(\g_0)+\dim \Ind_{\g_0}^\g \O_0$ (see \cite[Satz 4.5]{BK}).

\subsection{Primitive ideals of universal enveloping algebras and their central characters} \label{ss:primitive}

We recall the required background on primitive ideals and central characters of the universal
enveloping algebra $U(\g)$ of $\g$.  We refer to \cite{BJ} as a general reference
on primitive ideals and to \cite[Chapter 1]{HuBGG} as a general reference for
central characters.

An ideal $I$ of $U(\g)$ is called {\em primitive} if $I = \Ann_{U(\g)}(E)$ for some simple $U(\g)$-module $E$, 
where $\Ann_{U(\g)}(E)$ denotes the annihilator of $E$ in $U(\g)$.
We denote the spectrum of primitive ideals by $\Prim U(\g)$.  An ideal $I$ of $U(\g)$
is called {\em completely prime} if $U(\g)/I$ is a domain.  The spectrum of completely
prime primitive ideals is denoted $\Prim^1 U(\g)$.

The Poincar\'e--Birkhoff--Witt (PBW) filtration of $U(\g)$ induces a filtration on an ideal $I$
of $U(\g)$ and we write $\gr I \sub \gr U(\g) \cong S(\g)$ for the associated graded ideal.
We identify $S(\g)$ with $\C[\g]$ via our choice of nondegenerate invariant symmetric bilinear form
$(\cdot\,,\cdot)$ on $\g$.  We recall that the {\em associated
	variety} $\cVA(I)$ of an ideal $I$ of $U(\g)$ is the vanishing locus in $\g$ of $\gr I \sub \C[\g]$.
As mentioned in the introduction it is known that for $I \in \Prim U(\g)$ we have that $\cVA(I)$ is
the closure of a nilpotent orbit in $\g$ by Joseph's irreducibility theorem \cite{JoAV}.
For a nilpotent $G$-orbit $\O$ we define $\Prim_\O U(\g) = \{I \in \Prim(U(\g)) \mid \cVA(I) = \bar \O\}$
and $\Prim^1_\O U(\g) = \Prim^1 U(\g) \cap \Prim_\O U(\g)$.

Let $P$ be a parabolic subgroup of $G$, let $G_0$ be a Levi factor of
$P$, and let $U_P$ be the unipotent radical of $P$.  We have the decomposition $\p = \g_0 \oplus \u_P$.
Let $I_0 \in \Prim U(\g_0)$ and choose a simple $U(\g_0)$-module $E$ such that
$I_0 = \Ann_{U(\g_0)}(E) \sub U(\g_0)$.

We define the induced ideal
$$
\Ind_{\p}^\g(I_0) := \Ann_{U(\g)}(U(\g) \otimes_{U(\p)} E)
$$
where $E$ is viewed as a $U(\p)$-module via the projection $U(\p)\onto U(\g_0)$.
It is well-known and straightforward to show that the induced ideal $\Ind_{\p}^\g(I_0)$
can also be described as the largest
two-sided ideal contained in the left ideal $U(\g)(I_0+\u_P)$.
From this alternative description it follows that $\Ind_{\p}^\g(I_0)$ does not depend
on the choice of simple $U(\g_0)$-module $E$ with $I_0 = \Ann_{U(\g_0)}(E)$.
(For a discussion of the relationship between $\Ind_{\p}^\g$ and the choice of parabolic subalgebra $\p$ with Levi factor $\g_0$, see \cite[\S 5.4]{BJ}.)

The induced ideal $\Ind_{\p}^\g(I_0)$ is in general not primitive.  However,
if $I_0 \in \Prim^1 U(\g_0)$, then $\Ind_{\p}^\g(I_0) \in \Prim^1 U(\g)$ as is proved in
\cite[Theorem~3.1]{Co}.

Although $\Ind_\p^\g(I_0)$ is not primitive in general
its associated variety is the closure of a nilpotent orbit and is given by
\begin{equation} \label{e:indva}
	\cVA(\Ind_\p^\g(I_0)) = \bar{\Ind_{\g_0}^\g(\O_0)}
\end{equation}
where $\bar{\O}_0 = \cVA(I_0)$.  A proof of \eqref{e:indva} using results of Losev is explained within \cite[\S5]{BG}; when $I_0\in\Prim^1 U(\g_0)$ see also \cite[Lemma 7.3]{BJ}.

Let $\O_0$ be a nilpotent $G_0$-orbit and let $\O = \Ind_{\g_0}^\g(\O_0)$.
From the discussion above it follows that parabolic induction restricts to a map 	
$$
\Ind_\p^\g : \Prim^1_{\O_0} U(\g_0) \to \Prim^1_{\O} U(\g).
$$

We write $Z(\g)$ for the centre of $U(\g)$, and refer to a character $\chi : Z(\g) \to \C$
as a {\em central character} of $U(\g)$.  By Quillen's lemma we know that
$Z(\g)$ acts on a simple $U(\g)$-module $E$ by a central character
$\chi_E : Z(\g) \to \C$.  Thus, $\Ann_{U(\g)}(E) \cap Z(\g)$ is an ideal
of $Z(\g)$ of codimension $1$. Setting $I=\Ann_{U(\g)}(E)$, we write $\chi_I : Z(\g) \to \C$ for the corresponding central character.
It is then clear that $\chi_I = \chi_{E'}$ for any simple $U(\g)$-module $E'$ with $I = \Ann_{U(\g)}(E')$.

We write $U(\g)_0$ for the subalgebra of $U(\g)$ given by
the zero weight space for the adjoint action of $\t$ on $U(\g)$.  Then
$Z(\g) \sub U(\g)_0$.  We have that $U(\g)_0 = S(\t) \oplus (U(\g)\u \cap U(\g)_0)$ with $U(\g)\u \cap U(\g)_0$ being
a 2-sided ideal of $U(\g)_0$, where we recall that $\u$ is the Lie algebra of the unipotent radical of $B$.
Thus the projection $U(\g)_0 \to S(\t)$ defined from the decomposition $U(\g)_0 = S(\t) \oplus (U(\g)\u \cap U(\g)_0)$ is a
homomorphism.  The homomorphism $Z(\g) \to S(\t)$ given
by restriction is known to be an isomorphism onto its image and this image is
$S(\t)^{W,\bullet} := \{u \in S(\t) \mid w \bullet u = u \text{ for all } w \in W\}$, where to define the action of $W$ on $S(\t)$ we identity $S(\t)$ with the algebra of polynomial functions on $\t^{*}$ and
recall the dot action on $\t^*$ as defined in \S\ref{ss:notn}.
This isomorphism $\psi : Z(\g) \to S(\t)^{W,\bullet}$
is called the Harish-Chandra isomorphism.  Thus as $S(\t)^{W,\bullet} \cong \C[\t^*/(W,\bullet)]$, where $\t^{*}/(W,\bullet)$ denotes
the quotient of $\t^{*}$ under the dot-action of $W$, we may identify central characters of primitive ideals
of $U(\g)$ with $(W,\bullet)$-orbits in $\t^*$.
We write
$$
\ch : \Prim U(\g) \to \t^*/(W,\bullet)
$$
for this central character map.

In fact the map $\ch$
can be extended to all ideals $I$ of $U(\g)$ for which
$I \cap Z(\g)$ is of codimension $1$ in $Z(\g)$; such ideals are said
to {\em admit a central character}.
Given a highest weight $U(\g)$-module $M$,
we can see that the action of $Z(\g)$ on a highest weight vector of $M$
must be given by a central character, and thus $Z(\g)$ acts on all of $M$ by this central character.
A useful observation is that the map $\ch$ is set up so that if $I = \Ann_{U(\g)}(M)$ where $M$ is a highest
weight $U(\g)$-module with highest weight $\lambda \in \t^*$ with respect to $\b$,
then $\ch(I) = W \bullet \lambda$.

We now suppose that $P$ is a standard parabolic subgroup of $G$ and $G_0$ is the standard Levi factor
of $P$.
We note that $B_0 := B \cap G_0$ is a Borel subgroup of $G_0$.
The Weyl group of $G_0$ with respect to $T$ is denoted $W_0$.

We have the Harish-Chandra isomorphism $\psi_0 : Z(\g_0) \to S(\t)^{W_0,\bullet}$; here we note that
we can use the same $\rho$ for $G_0$ as was used for $G$ to define the dot action of $W_0$.
Therefore, we have a central character map
$$
\ch_0 : \Prim U(\g_0) \to \t^*/(W_0,\bullet).
$$
We write
$$
\pi : \t^*/(W_0,\bullet) \onto \t^*/(W,\bullet)
$$
for the projection map.

Let $I_0 \in \Prim U(\g_0)$.  By Duflo's theorem we can assume $I_0 = \Ann_{U(\g_0)}(L_0(\lambda))$ for some $\lambda \in \t^*$,
where $L_0(\lambda)$ denotes the simple highest weight $U(\g_0)$-module with highest weight $\lambda$ with respect
to the Borel subalgebra $\b_0$.  Then we have $\ch_0(I_0) = W_0 \bullet \lambda$.  Let $I = \Ind_\p^\g(I_0)$, so we have
$I = \Ann_{U(\g)}(U(\g) \otimes_{U(\p)} L_0(\lambda))$.  Since $U(\g) \otimes_{U(\p)} L_0(\lambda)$ is a highest weight $U(\g)$-module
of highest weight $\lambda$, we see that $\ch(I) = W \bullet \lambda$.  Summarising this discussion we have that
\begin{equation} \label{e:centralchars}
	\pi(\ch_0(I_0)) = \ch(\Ind_\p^\g(I_0)).
\end{equation}

\section{On Lusztig--Spaltenstein induction for simple algebraic groups of classical type}
\label{S:LSinduction}

For this section we restrict to the case where $G$ is simple of classical type. Our main goal is to prove the following theorem, which is a key ingredient in our proof of
Theorem~\ref{T:surj}.
\begin{theorem}\label{T:nonconj}
	Let $G$ be a simple algebraic group of classical type and let $\O$ be a nilpotent $G$-orbit.  Let $[\g_1,\O_1]$
	and $[\g_2,\O_2]$ be rigid induction data for $\O$, and let $(\g_1,\O_1)$ and $(\g_2,\O_2)$ be representatives thereof.
	Suppose that $\g_1 \sub \g_2$.  Then $\g_1=\g_2$ and $\O_1 = \O_2$.
\end{theorem}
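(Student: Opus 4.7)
The plan is to first reduce to the case where $\g_1$ and $\g_2$ are both standard Levi subalgebras of $\g$, using the observation at the end of \S\ref{ss:paralevi} that any inclusion of Levi subalgebras of $\g$ is $G$-conjugate to an inclusion of standard Levi subalgebras. The case $G = \SL_n$ is then immediate: by \S\ref{ss:LSind}, the only rigid nilpotent orbit in any Levi subalgebra $\sl_\bi$ is $\{0\}$, and the formula $\Ind_{\sl_\bi}^{\sl_n}(\{0\}) = \O_{\bi^t}$ recovers $\bi$, hence $\sl_\bi$, uniquely from $\O$. This forces $\g_1 = \g_2$ and $\O_1 = \O_2 = \{0\}$.

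For $G$ of type $\sB$, $\sC$ or $\sD$, I invoke the parameterisations from \S\ref{ss:paralevi} and \S\ref{ss:LSind}: write $\g_1 = \gl_\bi \oplus \h_1$ and $\g_2 = \gl_\bj \oplus \h_2$, where each $\h_k$ is a symplectic or orthogonal Lie algebra, and $\O_1 = \{0\} \times \mu_1$, $\O_2 = \{0\} \times \mu_2$ for rigid symplectic or orthogonal partitions $\mu_1, \mu_2$ in $\h_1, \h_2$ respectively. The inclusion $\g_1 \sub \g_2$ then translates into concrete combinatorial data: the parts of $\bi$ partition into a subset whose parts refine those of $\bj$, together with a complementary subset of $\GL$-factors which, combined with $\h_1$, make up $\h_2$.

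The next step is to apply the Kempken--Spaltenstein (KS) algorithm, which gives an explicit partition-level description of $\Ind_{\g_k}^\g(\O_k)$ by iterating a single combinatorial collapse operation once for each $\GL$-factor of the Levi, starting from the rigid partition $\mu_k$ on the inner classical factor $\h_k$. The identity $\Ind_{\g_1}^\g(\O_1) = \O = \Ind_{\g_2}^\g(\O_2)$ thus becomes an equality of partitions computed via KS from the two data sets $(\bi,\mu_1)$ and $(\bj,\mu_2)$. Transitivity of Lusztig--Spaltenstein induction, together with the concrete combinatorial description of $\g_1 \sub \g_2$, ensures that the KS computation for $\g_1$ factors through that for $\g_2$, giving us a handle on how the two computations differ.

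The main obstacle is the resulting combinatorial argument, which must force $\bi = \bj$, $\h_1 = \h_2$ and $\mu_1 = \mu_2$ from this KS-derived equality. The key will be to exploit the rigidity constraints on $\mu_1$ and $\mu_2$ (consecutive parts differing by at most $1$ and no forbidden multiplicity-$2$ parts) to show that $\O$ admits a unique \emph{rigid peeling} under KS: the innermost rigid partition and the sizes of the $\GL$-collapse steps are uniquely recoverable from $\O$ among all rigid induction data. A careful case analysis of the KS collapse rules is required, together with attention to the small subtleties in type $\sD$ arising from very even partitions and the special isogeny $\SO_2 \cong \GL_1$.
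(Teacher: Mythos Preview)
Your reduction to standard Levi subalgebras and the type $\sA$ case are correct and match the paper. However, your plan for types $\sB$, $\sC$, $\sD$ rests on a genuine misconception: you aim to show that ``$\O$ admits a unique \emph{rigid peeling} under KS'' and that ``the innermost rigid partition and the sizes of the $\GL$-collapse steps are uniquely recoverable from $\O$ among all rigid induction data.'' This is false. The theorem does \emph{not} assert that rigid induction data are unique, only that the Levi subalgebras of two distinct rigid induction data cannot be nested. Example~\ref{E:642} exhibits three distinct maximal admissible inc-sequences $(1,1,3)$, $(1,2,3)$, $(2,2)$ for $\lambda = (6,4,2)$ in $\sp_{12}$, giving three genuinely different rigid induction data for the same orbit; what must be checked is that none of the Levi subalgebras $\gl_1^2 \times \gl_3 \times \sp_2$, $\gl_1 \times \gl_2 \times \gl_3$, $\gl_2^2 \times \sp_4$ contains a conjugate of another. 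An argument whose goal is uniqueness cannot succeed.

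The paper's actual route is quite different. It first uses \cite[Lemma~8.4]{To} to replace $\O_\lambda$ by an orbit $\O_{\tilde\lambda}$ in a larger symplectic (or orthogonal) algebra for which $\tilde\lambda$ has all parts distinct and of the correct parity; a string of forced type~1 reductions (KS-property~\ref{KS:3diff}) then shows that every maximal admissible inc-sequence begins with a fixed subsequence $\bi'$ reducing $\tilde\lambda$ to the staircase $(2r,2r-2,\dots,2)$ or $(2r-1,2r-3,\dots,1)$. This reduces the theorem to Lemma~\ref{L:specialpartitions}, whose proof encodes the inclusion $\g_1 \sub \g_2$ as a function $h : \{1,\dots,s\} \to \{1,\dots,t+1\}$ with $\sum_{l \in h^{-1}(k)} i_l = j_k$, and then runs a step-by-step induction establishing $i_k = j_k$ and $h(k) = k$ for all $k$. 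The induction is driven by KS-property~\ref{KS:2diff} and two auxiliary observations bounding multiplicities and controlling the interaction of type~2 reductions on the staircase. Your proposal contains none of this mechanism; without the reduction to the staircase and the inductive comparison via $h$, the ``careful case analysis'' you allude to has no organizing structure and the wrong target.
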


The proof of this result is straightforward for $G=\SL_N$, due to the uniqueness of rigid induction data in that case, and so from now on we assume $G=\Sp_{2n}$ or $G=\SO_N$. The main step in our proof will be a combinatorial argument which we can apply when $\O$ corresponds to a partition of the form $(2r, 2r-2,\ldots,4,2)$ for $G=\Sp_{r(r+1)}$ or a partition of the form $(2r-1,2r-3,\ldots,3,1)$ for $G=\SO_{r^2}$. An argument involving \cite[Lemma 8.4]{To} then allows us to prove the result for all partitions.

Central to our approach to the combinatorial argument is the Kempken--Spaltenstein algorithm (or KS-algorithm for short)
which was introduced in \cite[\S3.1]{PT} and determines
the rigid induction data attached to a nilpotent orbit in $\g$.
We give an overview of this algorithm for the case $G = \Sp_{2n}$ and then
explain the minor adaptations required for $G = \SO_N$.  We refer to
\cite[\S 3--4]{PT} for all the claims about the KS-algorithm made below.

We begin with a symplectic partition $\lambda=(\lambda_1,\ldots,\lambda_r)$ of $2n$.  Then, as we
define and explain next, the KS algorithm makes a sequence of reductions to $\lambda$ (called type 1 reductions and type 2 reductions) corresponding to an admissible sequence
$\bi = (i_1,\dots,i_s)$, where $1 \le i_j \le r$ for each $j = 1,\dots,s$.

We say that a {\em type 1 reduction} can be made to $\lambda$ at position $i$ if
$\lambda_i \ge \lambda_{i+1} + 2$. Applying this reduction gives a partition $\lambda^i$ of $2n-2i$ defined by
$$
\lambda^i_j = \begin{cases} \lambda_j-2 & \text{if $j \le i$,} \\
	\lambda_j & \text{if $j > i$}. \end{cases}
$$

We say that a {\em type 2 reduction} can be made to $\lambda$ at position $i$ if $\lambda_i$ is even and
$\lambda_{i-1} > \lambda_{i} = \lambda_{i+1} > \lambda_{i+2}$ (here we consider the inequality $\lambda_0>\lambda_1$ to automatically hold). Applying this reduction gives a partition $\lambda^i$ of $2n-2i$ defined by
$$
\lambda^i_j = \begin{cases} \lambda_j-2 & \text{if $j < i$,} \\
	\lambda_j-1 & \text{if $i \le j \le i+1$,} \\
	\lambda_j & \text{if $j > i+1$}. \end{cases}
$$

We note that at a given position $i$, it is not possible that both a type 1 reduction and a type 2 reduction can be made to $\lambda$,
so there is no ambiguity in the notation $\lambda^i$.  We say that $i$ is an {\em admissible position}
for $\lambda$ if a type 1 or type 2 reduction can be made to $\lambda$ at position $i$.

We define recursively what it means for $\bi = (i_1,\dots,i_s)$ to be an {\em admissible sequence} for $\lambda$. First we introduce the notation that for $k = 1,\dots,s$ we write $\bi_k = (i_1,\dots i_k)$.  We say that $\bi_1$ is an admissible sequence for $\lambda$ if $i_1$ is an admissible position for $\lambda$; we then set $\lambda^{\bi_1} = \lambda^{i_1}$.
We then recursively say that $\bi_k$ is an admissible sequence for $\lambda$, and define $\lambda^{\bi_k} := (\lambda^{\bi_{k-1}})^{i_k}$, if $i_k$ is an admissible position for $\lambda^{\bi_{k-1}}$. We say that $\bi$ is a {\em maximal admissible sequence} if there is
no admissible position for $\lambda^\bi$.

To summarise, the KS-algorithm takes as input a symplectic partition $\lambda$
and an admissible sequence $\bi$, and outputs the symplectic partition $\lambda^\bi$.
The sequence of type 1 and type 2 reductions given by $\bi$ are used to determine $\lambda^\bi$.

To explain the significance of the outcome of this algorithm we require some notation.
We fix an admissible sequence $\bi = (i_1,\dots,i_s)$ for $\lambda$. We write $\O_\lambda$
for the nilpotent orbit in $\g = \sp_{2n}$ with Jordan type $\lambda$ and $\O_{\lambda^\bi}$
for the nilpotent orbit in $\sp_{2n_\bi}$ with Jordan type $\lambda^\bi$.
The upshot of the KS-algorithm is that 
$[\sp_{\bi,2n_\bi}, \{0\} \times \O_{\lambda^\bi}]$ is an induction datum for $\O_\lambda$, see \cite[Proposition~7]{PT}.
Here we note that although in \eqref{e:levisp} we just consider
the definition of $\sp_{\bi,2n_\bi}$ for $\bi$ an inc-sequence, the definition makes sense for any
admissible sequence $\bi$.
Furthermore, $\{0\} \times \O_{\lambda^\bi}$ is a rigid nilpotent orbit in $\sp_{\bi,2n_\bi}$
if and only if $\bi$ is a maximal admissible sequence.  Moreover, every rigid induction datum for $\O_\lambda$
is of the form $[\sp_{\bi,2n_\bi}, \{0\} \times \O_{\lambda^\bi}]$ for some
maximal admissible sequence $\bi$.  Thus to obtain all rigid induction data for $\O_\lambda$
we can apply the KS-algorithm for all maximal admissible sequences for $\lambda$.

Now we discuss the modification needed to the algorithm for $G = \SO_N$.  This is simply that
we replace ``even'' with ``odd'' in the description of a type 2 reduction.  With this version of the algorithm
all of the results given in the symplectic case above have analogues for the orthogonal case.  In particular,
a maximal admissible sequence $\bi = (i_1,\dots,i_s)$ gives a rigid induction datum
$[\so_{\bi,N_\bi}, \{0\} \times \O_{\lambda^\bi}]$ for $\O_\lambda$, where
now $\O_{\lambda}$ is a nilpotent orbit in $\so_N$ and $\O_{\lambda^\bi}$ is a nilpotent orbit in
$\so_{N_\bi}$; and, moreover, all rigid induction data are obtained in this way.

There is a subtlety in the orthogonal case
that if there are two $\SO_N$-orbits of Jordan type $\lambda$, then either there are two $\SO_{N_\bi}$-orbits
of Jordan type $\lambda^\bi$, or there are two conjugacy classes of Levi subgroups $\SO_{\bi,N_\bi}$.  In these
cases it is implicit that we have to choose the correct $\SO_{\bi,N_\bi}$-orbits with Jordan type $\lambda^\bi$, or the
correct $\SO_N$-conjugacy class of Levi subalgebras of the form $\so_{\bi,N_\bi}$.

We recall a property of the KS-algorithm that is important in this paper
as it reduces the number of maximal admissible sequences that have to be considered.  In the
statement $\lambda$ is a symplectic partition or an orthogonal partition depending on which case is being
considered.

\begin{ksprop} \label{KS:order}
	Any reordering of the entries in a maximal admissible
	sequence $\bi$ for $\lambda$ gives a maximal admissible sequence for $\lambda$, and
	the partition $\lambda^\bi$ does not depend on the order the entries of $\bi$.
\end{ksprop}

This property follows from \cite[Corollary~7]{PT} and \cite[Proposition~7]{PT}.
We remark here that for the orthogonal case the statement of \cite[Corollary~7]{PT} excludes the
possibility that $N_\bi = 2$.  We note, however, that in the case $N_\bi = 2$, we have $\lambda^\bi = (1^2)$ and
we can make a final type 2 reduction at position $1$, and thus $\bi$ is not a maximal admissible
sequence.

As a consequence of KS-property~\ref{KS:order} we can
restrict to considering maximal admissible sequences $\bi$ that are inc-sequences, and
we refer to such $\bi$ as a {\em maximal admissible inc-sequence}.

We now have most of the machinery needed to prove the following lemma.
\begin{lemma}  \label{L:specialpartitions}
	Let $r \in \Z_{>0}$.
	\begin{enumerate}
		\item[{\em (a)}] Let $G = \Sp_{r(r+1)}$ and let $\lambda = (2r,2r-2,\dots,4,2)$.  Let $[\g_1,\O_1]$ and 
		$[\g_2,\O_2]$ be rigid induction data for $\O_\lambda$, and let $(\g_1,\O_1)$ and $(\g_2,\O_2)$ be representatives thereof.
		Suppose that $\g_1 \sub \g_2$.  Then $\g_1=\g_2$ and $\O_1 = \O_2$.
		\item[{\em (b)}] Let $G = \SO_{r^2}$ and let $\lambda = (2r-1,2r-3,\dots,3, 1)$.  Let $[\g_1,\O_1]$ and
		$[\g_2,\O_2]$ be rigid induction data for $\O_\lambda$, and let $(\g_1,\O_1)$ and $(\g_2,\O_2)$ be representatives thereof.
		Suppose that $\g_1 \sub \g_2$.  Then $\g_1=\g_2$ and $\O_1 = \O_2$.
	\end{enumerate}
\end{lemma}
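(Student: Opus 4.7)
My plan is to reduce the claim to a combinatorial statement about the Kempken--Spaltenstein (KS) algorithm. By the observation at the end of \S\ref{ss:paralevi}, I may assume $\g_1$ and $\g_2$ are both standard Levi subalgebras. As recalled preceding the lemma, each rigid induction datum $[\g_k,\O_k]$ for $\O=\O_\lambda$ arises from a maximal admissible inc-sequence $\bi_k$ for $\lambda$, with $\g_k=\sp_{\bi_k,2n_{\bi_k}}$ (respectively $\so_{\bi_k,N_{\bi_k}}$) and $\O_k$ the rigid orbit determined by $\lambda^{\bi_k}$. Since $\bi_k$ determines both the Levi and the rigid orbit, and since $\g_1\sub\g_2$ with $\dim\g_1=\dim\g_2$ (equality of conjugacy classes) forces $\g_1=\g_2$ as subsets, the lemma reduces to showing that $\bi_1=\bi_2$ whenever $\g_1\sub\g_2$.

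The first step is to translate the inclusion of standard Levis into a combinatorial condition on the multisets of parts underlying the inc-sequences: $\sp_{\bi_1,2n_{\bi_1}}\sub\sp_{\bi_2,2n_{\bi_2}}$ (up to $G$-conjugacy) if and only if the multiset of parts of $\bi_1$ decomposes as a disjoint union $\bj_1\sqcup\cdots\sqcup\bj_t\sqcup\bj'$, where $\bi_2=(i_1^{(2)},\ldots,i_t^{(2)})$, each $\bj_k$ is a multiset of positive integers summing to $i_k^{(2)}$ (a partition of the $k$th part of $\bi_2$), and $\bj'$ is an additional multiset with $|\bj'|=n_{\bi_2}-n_{\bi_1}\ge 0$ representing a standard Levi of the residual $\sp_{2n_{\bi_2}}$-factor. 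The orthogonal case is handled analogously. In particular this already forces $|\bi_1|\ge|\bi_2|$ and rules out many pairs outright.

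The heart of the proof is then a direct combinatorial analysis of the maximal admissible inc-sequences for the staircase $\lambda$. Two features of the staircase drive the argument: since $\lambda_j-\lambda_{j+1}=2$ and all parts of $\lambda$ are distinct, initially only type 1 reductions are admissible; moreover a type 2 reduction at a position $i$ becomes admissible only once a type 1 reduction at $i$ has created an equal pair $\lambda_i=\lambda_{i+1}$ of the required parity. Consequently each maximal admissible inc-sequence $\bi$ decomposes into `paired' entries $(i,i)$ (a type 1 followed by a type 2 at the same position $i$) and `singleton' entries $(i)$ (a type 1 at $i$ alone), the positions of pairs and singletons being tightly constrained by the requirement that $\lambda^\bi$ be a rigid partition of its total. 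I would then enumerate the possible pair--singleton patterns for $\bi$ and verify by direct case analysis that no two distinct such patterns yield multisets related by the refinement-plus-extension condition above.

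The main obstacle is to carry out this combinatorial case analysis cleanly and uniformly in $r$: one must argue that no multiset of parts $M_1$ arising from one pair--singleton pattern can be produced from another $M_2$ by splitting the entries of $M_2$ into smaller positive integers and adjoining extras whose total is bounded by $n_{\bi_2}$. The orthogonal staircase $\lambda=(2r-1,2r-3,\ldots,1)$ is handled by the same strategy, with odd parts replacing even parts as the relevant parity in type 2 reductions.
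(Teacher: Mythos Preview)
Your setup is correct and matches the paper: the reduction to standard Levi subalgebras, the translation of the inclusion $\g_1\subseteq\g_2$ into the refinement-plus-residual condition on the multisets underlying $\bi_1$ and $\bi_2$ (the paper encodes this as a function $h:\{1,\dots,s\}\to\{1,\dots,t,t+1\}$ with $\sum_{l\in h^{-1}(k)}i_l=j_k$), and the pair--singleton structure of maximal admissible inc-sequences for the staircase (the paper records this as two Observations: each position occurs with multiplicity at most two, and if twice then type~1 followed by type~2; and a type~2 at $i$ forbids any reduction at $i+1$).

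The gap is that you stop precisely where the work begins. ``Enumerate the possible pair--singleton patterns and verify by direct case analysis'' is not a proof, and your own remark that this is ``the main obstacle'' is accurate: the number of maximal admissible inc-sequences grows with $r$, so a global enumeration cannot be done uniformly. The paper does \emph{not} classify all patterns. Instead it runs an induction on $k$ proving $i_k=j_k$ and $h(l)=l$ for $l\le k$. The inductive step fixes $i=i_k$ and $j=j_k$, notes $j\ge i$ (else $h^{-1}(k)=\varnothing$), assumes $j>i$ for contradiction, and splits into the cases $i_{k-1}<i$ and $i_{k-1}=i$. In each case, KS-property~\ref{KS:2diff} pins down the next two or three entries of both $\bi$ and $\bj$ sharply enough (e.g.\ $(j_k,j_{k+1})=(i+1,i+1)$ forces specific local behaviour, while $(i_k,i_{k+1},i_{k+2})$ is one of $(i,i+2,i+2)$, $(i,i+1,b)$, $(i,i,c)$) that one checks no subset of the remaining $i_l$'s can sum to $j_k$ and $j_{k+1}$. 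This local, position-by-position comparison is the missing idea; replace your global enumeration with it.
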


Before embarking on the proof of Lemma~\ref{L:specialpartitions} we choose to first include an example.  This should be helpful for the reader
as the proof of Lemma~\ref{L:specialpartitions}
is combinatorial and a little technical,
and the example demonstrates some of the ideas in the proof.

\begin{exa} \label{E:642}
	We consider the nilpotent orbit with Jordan type $\lambda = (6,4,2)$ in $\sp_{12}$.  We determine
	all maximal admissible inc-sequences, showing that there are just three of them.
	Let $\bi = (i_1,\dots,i_s)$ be a maximal admissible inc-sequence.  We proceed by considering cases
	based on the values of $i_1$ and $i_2$.
	
	\smallskip
	\noindent
	{\em Case $i_1 = 3$.}  We have $\lambda^{\bi_1} = \lambda^3 = (4,2,0)$ and $i_2 \ge 3$.  Since $\lambda^3$ is not
	a rigid symplectic partition and there are no admissible positions $i \ge 3$ for $\lambda^3$, we
	deduce that $i_1 = 3$ is not possible.
	
	\smallskip
	\noindent
	{\em Case $i_1 = 2$.}  We have $\lambda^{\bi_1} = \lambda^2 = (4,2,2)$ and $i_2 \ge 2$. 
	
	{\em Case $(i_1,i_2)=(2,3)$.}  We have $\lambda^{\bi_2} = \lambda^{(2,3)} = (2,0,0)$ and $i_3 \ge 3$.
	Since $\lambda^{(2,3)}$ is not a rigid symplectic partition and there are no admissible positions $i \ge 3$
	for $\lambda^{(2,3)}$, we deduce that $i_2 = 3$ is not possible.
	
	{\em Case $(i_1,i_2)=(2,2)$.}  We have $\lambda^{\bi_2} = \lambda^{(2,2)} = (2,1,1)$ is a rigid symplectic
	partition.  Thus $\bi = (2,2)$ is a maximal admissible inc-sequence for $\lambda$.
	
	\smallskip
	\noindent
	{\em Case $i_1 = 1$.}  We have $\lambda^{\bi_1} = \lambda^1 = (4,4,2)$. 
	
	{\em Case $(i_1,i_2)=(1,3)$.}  We have $\lambda^{\bi_2} = \lambda^{(1,3)} = (2,2,0)$ and $i_3 \ge 3$.
	Since $\lambda^{(1,3)}$ is not a rigid symplectic partition and there are no admissible positions $i \ge 3$
	for $\lambda^{(1,3)}$, we can deduce that $i_2 = 3$ is not possible. 
	
	{\em Case $(i_1,i_2)=(1,2)$.}  We have $\lambda^{\bi_2} = \lambda^{(1,2)} = (2,2,2)$.
	The only admissible position for $(2,2,2)$ is 3, so we must have $i_3 = 3$ and then
	$\lambda^{\bi_3} = \lambda^{(1,2,3)} = (0,0,0)$ is a rigid symplectic partition.
	Thus $\bi = (1,2,3)$ is a maximal admissible inc-sequence for $\lambda$. 
	
	{\em Case $(i_1,i_2)=(1,1)$.}  We have $\lambda^{\bi_2} = \lambda^{(1,1)} = (3,3,2)$.
	The only admissible position for $(3,3,2)$ is 3, so we must have $i_3 = 3$ and then
	$\lambda^{\bi_3} = \lambda^{(1,1,3)} = (1,1,0)$ is a rigid symplectic partition.
	Thus $\bi = (1,1,3)$ is a maximal admissible inc-sequence for $\lambda$.
	
	\smallskip
	
	We have seen that there are three maximal admissible inc-sequences which we label as $\bi^1 = (1,1,3)$, $\bi^2 = (1,2,3)$ and $\bi^3 = (2,2)$.
	We label the rigid induction data corresponding to $\bi^j$ by $[\g_j,\O_j]$ for $j=1,2,3$, and these are given as follows.
	\begin{itemize}
		\item For $\bi^1$ we have $\g_1 = \gl_1 \times \gl_1 \times \gl_3 \times \sp_2$ and $\O_1$ is the zero orbit.
		\item For $\bi^2$ we have $\g_2 = \gl_1 \times \gl_2 \times \gl_3$ and $\O_2$ is the zero orbit.
		\item For $\bi^3$ we have $\g_3 = \gl_2 \times \gl_2 \times \sp_4$ and $\O_3 = \{0\}^2 \times \O_{(2,1,1)}$.
	\end{itemize}
	
	In preparation for the proof of Lemma~\ref{L:specialpartitions} it is helpful for us to observe
	that for $i,j \in \{1,2,3\}$ with $i \ne j$ there is no inclusion $\tilde \g_i \sub  \g_j$ for $\tilde \g_i$ a $G$-conjugate of $\g_i$.
	To do this we recall from \S\ref{ss:paralevi} that we may assume $\tilde \g_i$ is a standard Levi subalgebra and thus of the form described in \S\ref{ss:paralevi}.
	
	To demonstrate this lack of inclusions, first consider the standard Levi subalgebras of $\g_1$. These are of the form $\gl_1 \times \gl_1 \times \gl_{\bj^1}
	\times \sp_{\bj^2,2n_{\bj^2}}$, where $\bj^1$ and $\bj^2$ are inc-sequences with $|\bj^1| = 3$ and $|\bj^2|\le 1$, and $n_{\bj^2}=1-|\bj^2|$.
	Therefore, $\gl_{\bj^1}$ is one of $\gl_3$, $\gl_1 \times \gl_2$ or $\gl_1 \times \gl_1 \times \gl_1$, and $\sp_{\bj^2,2n_{\bj^2}}$
	is one of $\sp_2$ or $\gl_1$.
	Since none of these Levi subalgebras involves both a $\gl_2$ and a $\gl_3$, we see as above that no $G$-conjugate of $\g_2$ can
	be a subalgebra of $ \g_1$, and
	since none of these involve an $\sp_4$ we see that no $G$-conjugate of $\g_3$ can be a subalgebra of $ \g_1$.
	
	We can similarly consider the forms of all standard Levi subalgebras of $\g_2$ and note that none of these involve a symplectic
	subalgebra (containing a root subspace for a long root), so that no conjugates of either $\g_1$ nor $\g_3$ are contained in $\g_2$.  For $\g_3$ we can see that no standard Levi subalgebra can involve a $\gl_3$, so that no conjugate of either $\g_1$ or $\g_2$
	is contained in $\g_3$.
\end{exa}

We pick out a key property of the KS-algorithm, which is demonstrated in Example~\ref{E:642}.

\begin{ksprop}  \label{KS:2diff}
	Let $\lambda$ be a symplectic partition or an orthogonal partition, depending on which case we are considering.
	Let $\bi$ be a maximal admissible inc-sequence
	for $\lambda$, and let $i \in \Z_{\ge 1}$ with $i_1 \ge i$.
	Suppose that $\lambda_i = \lambda_{i+1} + 2$.   Then $i_1 = i$ or $i_1 = i+1$, and moreover, if $i_1 = i+1$ and $k$ is maximal such that $i_k = i+1$, then the $k$th reduction determined by $\bi$ is of type 2.
\end{ksprop}

\begin{proof}
	Since $\bi$ is a maximal admissible sequence, we must have $\lambda^\bi_i \le \lambda^\bi_{i+1}+1$
	and thus that $\lambda^{\bi_j}_i - \lambda^{\bi_j}_{i+1} < \lambda^{\bi_{j-1}}_i - \lambda^{\bi_{j-1}}_{i+1}$
	for some $j$.  The only way we can get that $\lambda^{\bi_j}_i - \lambda^{\bi_j}_{i+1}
	< \lambda^{\bi_{j-1}}_i - \lambda^{\bi_{j-1}}_{i+1}$ is for $i_j =i$, or for $i_j = i+1$ corresponding to a type 2 reduction.
	Since $\bi$ is an admissible inc-sequence we deduce that $i_1 = i$ or $i_1 = i+1$.  Furthermore, in the latter case
	there must be a type 2 reduction at position $i+1$, which implies that for the maximal $k$ such that
	$i_k = i+1$, we have that the reduction corresponding to $i_k$ is of type 2.
\end{proof}

We move on to prove Lemma~\ref{L:specialpartitions}, which covers a key special case of Theorem~\ref{T:nonconj}.

\begin{proof}[Proof of Lemma~\ref{L:specialpartitions}]
	(a) We assume that $r > 3$ as the case $r=3$ is covered in Example~\ref{E:642}, and the cases $r=1$ and $r = 2$
	can be dealt with easily.
	
	Arguing as in \S\ref{ss:paralevi} and Example~\ref{E:642}, we may assume that $\g_1$ and $\g_2$ are
	standard Levi subalgebras with
	$\g_1$ contained in $\g_2$.
	There are maximal admissible inc-sequences $\bi = (i_1,\dots,i_s)$ and $\bj = (j_1,\dots,j_t)$ such that
	$\g_1 = \sp_{\bi,2n_\bi}$ and $\O_1 = \O_{\lambda^\bi}$, and $\g_2 = \sp_{\bj,2n_\bj}$ and $\O_2 = \O_{\lambda^\bj}$. As explained in \S\ref{ss:paralevi},
	$\g_1$ is a standard Levi subalgebra of $\g_2$.  Now by considering the form of Levi subalgebras given in \eqref{e:levigl} and \eqref{e:levisp}
	we deduce that there is a function $h : \{1,\dots,s\} \to \{1,\dots,t,t+1\}$ such that for each $k \in \{1,\dots,t\}$ we have
	$\sum_{l \in h^{-1}(k)} i_l = j_k$.
	The idea here is that the inclusion
	$\g_1 \sub \g_2$ is given by inclusions $\bigoplus_{l \in h^{-1}(k)} \gl_{i_l} \sub \gl_{j_k}$ and an inclusion $( \bigoplus_{l \in h^{-1}(t+1)} \gl_{i_l} ) \oplus \sp_{2n_\bi}
	\sub \sp_{2n_\bj}$.
	Moreover, for such a function $h$ we may (and shall) assume that for $l \le l'$ with $i_l = i_{l'}$ we have $h(l) \le h(l')$.
	
	We prove by induction on $k$ that $\bi_k = \bj_k$ and that $h(l) = l$ for $l = 1,\dots,k$.  For the case
	$k = s$ this implies that $\bi = \bj$ and thus that $\g_1 = \g_2$ and $\O_1 = \O_2$ as required
	
	We could take our base case to be $k=0$, which is trivial, but we find it more instructive to
	include $k =1$ for the base case.  We are led to consider the possibilities for $(i_1,i_2,i_3)$ and the possibilities for $(j_1,j_2,j_3)$.
	The analysis of cases below is very similar to that in Example~\ref{E:642}.
	
	By KS-property~\ref{KS:2diff}, we have $i_1=1$ or $i_1 = 2$.  We consider these cases separately.
	
	\noindent
	{\em Case $i_1 = 2$.}   We must have $i_2 = 2$ by KS-property~\ref{KS:2diff}.  Then we have $\lambda^{(2,2)} = (2r-4,2r-5,2r-5,2r-6,2r-8,\dots,2)$ and we see
	that $i_3 \ge 4$.
	
	\noindent
	{\em Case $i_1 = 1$.}  We have $\lambda^1 = (2r-2,2r-2,2r-4,2r-6,\dots,2)$.  By using KS-property~\ref{KS:2diff}
	we see that the only possibilities for $i_2$ are $i_2 = 1$, $i_2 = 2$ or $i_2=3$,
	and we consider these cases.
	
	{\em Case $(i_1,i_2) = (1,3)$.}  Using KS-property~\ref{KS:2diff} again we have that $i_3 = 3$. Then we have 
	$\lambda^{(1,3,3)} = (2r-6,2r-6,2r-7,2r-7,2r-8,\dots,2)$.  By considering the subsequent reductions we see that 
	$\lambda^{\bi_k}_1$ is even and  $\lambda^{\bi_k}_1 = \lambda^{\bi_k}_2 > \lambda^{\bi_k}_3$ for each $k$.
	This is not possible as $\lambda^{\bi}$ is a rigid symplectic partition.
	
	{\em Case $(i_1,i_2) = (1,2)$.}
	We have $\lambda^{(1,2)} = (2r-4,2r-4,2r-4,2r-6,\dots,2)$ and then by KS-property~\ref{KS:2diff}
	we have that $i_3 \ge 3$.
	
	{\em Case $(i_1,i_2) = (1,1)$.}
	We have $\lambda^{(1,1)} = (2r-3,2r-3,2r-4,2r-6,\dots,2)$ and then by KS-property~\ref{KS:2diff}
	we have that $i_3 \ge 3$. 
	
	Summarising this case analysis we deduce that $(i_1,i_2,i_3)$ is one of:  
	$(2,2,a)$ where $a \ge 4$; 
	$(1,2,b)$ where $b \ge 3$; or 
	$(1,1,c)$ where $c \ge 3$.
	Similarly $(j_1,j_2,j_3)$ must be one of these possibilities.
	
	Suppose that $i_1 = 2$ and $j_1 = 1$. Then we have $h(l) > 1$ for all $l$, as $2 = i_1 > j_1 = 1$,
	and $\sum_{l \in h^{-1}(1)} i_l = 0 \ne j_1$.
	Thus $i_1 = 2$ and $j_1 = 1$ is not possible.
	
	Suppose that $i_1 = 1$ and $j_1 = 2$, so $(j_1,j_2,j_3)$ is of the form $(2,2,a)$ and $(i_1,i_2,i_3)$ is one of the two
	possibilities with $i_1 = 1$.  We have that $\sum_{l \in h^{-1}(1)} i_l = 2$, and thus the multiset $\{i_l \mid l \in h^{-1}(1)\}$ is either $\{1,1\}$
	or $\{2\}$.  Similarly, the multiset $\{i_l \mid l \in h^{-1}(2)\}$ is either $\{1,1\}$ or $\{2\}$.  For this to be possible, one of the following possibilities must occur:
	\begin{itemize}
		\item
		$1$ occurs with multiplicity at least 4 in $\bi$; 
		\item
		$1$ occurs with multiplicity at least 2 and 2 occurs with multiplicity at least 1 in $\bi$; or 
		\item
		$2$ occurs with multiplicity at least 2 in $\bi$. 
	\end{itemize}
	By considering the two possibilities for $(i_1,i_2,i_3)$ with $i_1 = 1$, we see that none of the above three conditions hold for $\bi$.
	Thus $i_1 = 1$ and $j_1 = 2$ is not possible.
	
	Therefore, $i_1 = j_1$. Then we must also have that $h(1) = 1$, as $\bi$ is an increasing
	sequence and, by assumption, $h(1) \le h(l)$ for any $l$ such that $i_1 = i_l$.
	This completes the base case.

	Before moving on to the inductive step, we make a couple of useful observations that we exploit several times below. Note that these observations relate to the KS-algorithm as applied to the partitions considered in the statement of the lemma, but are not properties of the KS-algorithm in general.
	\begin{obs}\label{o1}
		For any $i$ the multiplicity of $i$ in $\bi$ is at most 2,
		and if the multiplicity is 2 then the first reduction
		at position $i$ is a type 1 reduction and the second is a type 2 reduction.
	\end{obs}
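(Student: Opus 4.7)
The plan is to prove Observation~\ref{o1} by induction on $i$, tracking how the quantity $\lambda_i - \lambda_{i+1}$ and the parity of $\lambda_i$ evolve as the KS-algorithm processes the entries of $\bi$. The key preliminary is a tabulation of the effect of each reduction on these quantities: a type 1 reduction at position $i$ decreases $\lambda_i - \lambda_{i+1}$ by $2$; a type 2 reduction at position $i$ leaves $\lambda_i - \lambda_{i+1}$ unchanged while decreasing both $\lambda_i$ and $\lambda_{i+1}$ by $1$; a reduction at any position $j \leq i-2$ touches neither $\lambda_i$ nor $\lambda_{i+1}$; and a reduction at position $j = i-1$ leaves $\lambda_{i+1}$ alone and decreases $\lambda_i$ by $1$ or $0$ according to whether it is of type 2 or of type 1.

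Combining these remarks with the inductive hypothesis applied at position $i - 1$, which bounds the number of type 2 reductions performed there by one, the value of $\lambda_i - \lambda_{i+1}$ just before the algorithm reaches position $i$ differs from its initial value (which equals $2$ in all cases apart from $i = r$ in the orthogonal case, where it equals $1$) by at most $1$. A short case analysis then completes the argument. If $\lambda_i - \lambda_{i+1} \leq 1$, then neither a type 1 reduction (difference too small) nor a type 2 reduction (parts unequal) can be performed at $i$, so the multiplicity of $i$ in $\bi$ is zero. If $\lambda_i - \lambda_{i+1} = 2$, then no type 2 reduction was performed at $i-1$, so $\lambda_i$ retains its initial parity (even in the symplectic case, odd in the orthogonal case); the equality condition for a type 2 reduction at $i$ therefore fails and the first reduction at $i$ must be type 1, which equalises $\lambda_i$ and $\lambda_{i+1}$ while preserving their common parity; any subsequent reduction at $i$ must then be type 2 (type 1 being obstructed by equality), and after this type 2 the common parity flips, making neither a further type 1 reduction nor a further type 2 reduction possible at $i$.

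This establishes both parts of the observation uniformly across the symplectic and orthogonal cases, the only difference between them being the parity that appears in the definition of a type 2 reduction. The main potential obstacle is making sure that the inductive hypothesis is adequate, and the crucial use is precisely that it bounds the number of type 2 reductions at position $i-1$ by one, which in turn controls both the starting value of $\lambda_i - \lambda_{i+1}$ and the starting parity of $\lambda_i$. A minor boundary point is that at $i = r$ the strict inequality $\lambda_{i+1} > \lambda_{i+2}$ in the definition of a type 2 reduction may fail after a type 1, which can only lower the multiplicity at that position further and therefore does not affect the desired conclusion.
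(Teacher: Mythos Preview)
Your argument is correct and rests on the same two ingredients the paper invokes in its one-line justification: the parity of $\lambda_i$ bounds the number of type~2 reductions at $i$ by one, and the partition constraint $\lambda^{\bi_k}_i \ge \lambda^{\bi_k}_{i+1}$ (together with the initial gap $\lambda_i - \lambda_{i+1} = 2$) bounds the number of type~1 reductions at $i$ by one and forces type~1 before type~2. The induction on $i$ is more scaffolding than the paper uses, however. The parity argument at position $i$ is self-contained: among consecutive reductions at position $i$ (consecutive since $\bi$ is an inc-sequence), each type~2 flips the parity of $\lambda_i$ while each type~1 preserves it, so at most one type~2 can occur regardless of what happened at $i-1$. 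Likewise, for the type~1 bound you only need that $\lambda_i - \lambda_{i+1} \le 2$ upon arrival at position $i$, and this holds simply because earlier reductions never increase this quantity; your inductive hypothesis at $i-1$ delivers the sharper lower bound $\lambda_i - \lambda_{i+1} \ge 1$, which you then use to deduce multiplicity zero in that case, but this is more than the Observation asks for.

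One small wrinkle: your justification ``(parts unequal)'' in the case $\lambda_i - \lambda_{i+1} \le 1$ does not literally cover the subcase $\lambda_i - \lambda_{i+1} = 0$, which by your own accounting can arise only at $i = r$ in the orthogonal case after a type~2 at $r-1$. There the type~2 at $i$ is still blocked, but for a different reason --- $\lambda_r = 0$ has the wrong parity, and in any case $\lambda_{r+1} = \lambda_{r+2} = 0$ violates the strict inequality required for a type~2 reduction.
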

	This is
	necessary as $\lambda^{\bi}_{i} \ge \lambda^\bi_{i+1}$, and for parity reasons we can have at most one
	type 2 reduction at position $i$. 
	\begin{obs}\label{o2}
		If there is a type 2 reduction at
		position $i$, then there are no reductions at position $i+1$.
	\end{obs}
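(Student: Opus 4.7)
I would argue by contradiction. Suppose step $j$ of $\bi$ is a type 2 reduction at position $i$, and that there exists some $k$ with $i_k = i+1$. Since $\bi$ is an inc-sequence and $i_j = i < i+1$, we must have $k > j$, and I take $k$ to be the least such index, so the intermediate entries $i_{j+1}, \dots, i_{k-1}$ all equal $i$.

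The core bookkeeping step is to track how the consecutive gaps $d_m := \lambda'_m - \lambda'_{m+1}$ evolve, where $\lambda'$ denotes the current partition. A short case analysis directly from the definitions of the two reduction types shows that a type 1 reduction at position $p$ changes only $d_p$ (dropping it by $2$), whereas a type 2 reduction at position $p$ changes only $d_{p-1}$ and $d_{p+1}$ (each dropping by $1$), and in particular does not affect $d_p$. For both of the partitions $\lambda$ appearing in the statement of the lemma, every consecutive gap is initially $d_m = 2$.

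I would then invoke Observation~\ref{o1} in two places. First, it guarantees there is at most one type 2 reduction at any position, so step $j$ is the unique type 2 at $i$; combined with the inc property this means no earlier reduction could have touched $d_{i+1}$, so $d_{i+1} = 2$ just before step $j$ and $d_{i+1} = 1$ just after. Second, Observation~\ref{o1} forces every type 1 at position $i$ to precede the type 2 at position $i$, so no reduction at position $i$ can occur after step $j$; combined with the minimality of $k$ this forces $k = j+1$.

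Finally, step $k = j+1$ is a reduction at position $i+1$ performed when $d_{i+1} = 1$. But a type 1 at $i+1$ requires $d_{i+1} \ge 2$ and a type 2 at $i+1$ requires $d_{i+1} = 0$, so neither reduction is admissible — the desired contradiction. The only place requiring care is the tabulation of the effect of each reduction type at each position on each $d_m$; once this is done the rest is immediate, and the argument runs identically in the symplectic and orthogonal settings because only the initial values $d_m = 2$ are used.
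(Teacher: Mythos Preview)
Your proof is correct and follows essentially the same approach as the paper: the key point is that after the type 2 reduction at position $i$ the gap $\lambda_{i+1} - \lambda_{i+2}$ equals $1$, forbidding any reduction at position $i+1$. The paper argues this directly and more tersely (simply asserting $\lambda^{\bi_{k'}}_{i+1} = \lambda^{\bi_{k'}}_{i+2} + 1$ and hence $i_{k'+1} > i+1$), whereas you frame it as a contradiction and make the gap-tracking and the use of Observation~\ref{o1} explicit.
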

	To explain this suppose that
	$i_{k'} = i$ and this corresponds to a type $2$ reduction.  Then we have
	$\lambda^{\bi_{k'}}_{i+1} = \lambda^{\bi_{k'}}_{i+2}+1$, so that $i+1$ is not an
	admissible position for $\lambda^{\bi_{k'}}$, and thus $i_{k'+1} > i+1$.
	
	Moving on to the inductive step, we assume, for some $k \le s$, that $\bi_{k-1} = \bj_{k-1}$ and $h(l) = l$ for all $l < k$.
	
	Let $i = i_k$ and $j = j_k$.  We note that $j \ge i$, because if $j < i$, then there are no $l$ such that $h(l) = k$, and thus
	$\sum_{l \in h^{-1}(k)} i_l = 0 \ne j$.  We aim to show that  $i = j$, which implies that $h(k) = k$, to complete 
	the inductive step.  Thus we assume that $j > i$ and we aim for a contradiction.  We also note that if $k = s$, then we
	must have $i = j$, so we assume that $k < s$.
	
	We proceed to consider two cases.
	
	\smallskip

	\noindent
	{\em Case $i_{k-1}<i$.}
	We first note that by Observation~\ref{o2} we cannot have $i_{k-1}=i-1$ with this reduction being of type $2$ otherwise
	$i$ is not an admissible position for $\lambda^{\bi_{k-1}}$.
	Thus in the first $k-1$ reductions given by $\bi$ there have been no reductions at position $i$ and 
	no type 2 reduction at position $i-1$, so none of these reductions have  decreased $\lambda_{i'}$ for $i' \ge i$.  Therefore,
	$(\lambda^{\bi_{k-1}}_i,\lambda^{\bi_{k-1}}_{i+1},\lambda^{\bi_{k-1}}_{i+2}) =
	(2r',2r'-2,2r'-4)$, where $r'=r-i+1$, and $(\lambda^{\bi_{k}}_i,\lambda^{\bi_{k}}_{i+1},\lambda^{\bi_{k}}_{i+2}) =
	(2r'-2,2r'-2,2r'-4)$. Here we can exclude the possibility that $r' = 1$ as then we would have $k = s$, so we assume that $r' \ge 2$.
	
	As $j_k > i$ we have that $j_k = j_{k+1} = i+1$ by KS-property~\ref{KS:2diff}.
	Then we get $(\lambda^{\bj_{k+1}}_i,\lambda^{\bj_{k+1}}_{i+1},\lambda^{\bj_{k+1}}_{i+2}) =
	(2r'-4,2r'-5,2r'-5)$, and thus $(j_k,j_{k+1},j_{k+2}) = (i+1,i+1,a)$ with $a \ge i+3$. From this 
	we can exclude the case $r'=2$, and thus assume that $r' \ge 3$.

	We consider the possibilities for $i_{k+1}$.  By using
	KS-property~\ref{KS:2diff} (at the index $i+1$) we have that $i_{k+1}$ is $i$, $i+1$ or $i+2$. Then:
	\begin{itemize}
		\item If $i_{k+1} = i+2$, then by KS-property~\ref{KS:2diff} we also have $i_{k+2} = i+2$. (We note that this case is not possible for $r'=3$.)
		
		\item If $i_{k+1} = i+1$, then we have $(\lambda^{\bi_{k+1}}_i,\lambda^{\bi_{k+1}}_{i+1},\lambda^{\bi_{k+1}}_{i+2})
		= (2r'-4,2r'-4,2r'-4)$ and see that $i_{k+2} \ge i+2$. 
		
		\item If $i_{k+1} = i$, then we have $(\lambda^{\bi_{k+1}}_i,\lambda^{\bi_{k+1}}_{i+1},\lambda^{\bi_{k+1}}_{i+2})
		= (2r'-3,2r'-3,2r'-4)$ and see that $i_{k+2} \ge i+2$. 
	\end{itemize}
	
	Summarising we get that  $(i_k,i_{k+1},i_{k+2})$ is one of $(i,i+2,i+2)$, $(i,i+1,b)$,
	where $b \ge i+2$, or $(i,i,c)$ where $c \ge i+2$.
	
	We now consider $h^{-1}(k)$, which is a subset of $\{k,k+1,\dots,s\}$ such
	that $\sum_{l \in h^{-1}(k)} i_l = i+1$.  Similarly, $h^{-1}(k+1)$ is a subset of $\{k,k+1,\dots,s\}$ such
	that $\sum_{l \in h^{-1}(k+1)} i_l = i+1$. We see from the above possibilities for $(i_k,i_{k+1},i_{k+2})$ 
	that this is not possible.
	
	Therefore, we have shown that $i_{k-1}<i$ is impossible, and we have our contradiction in this case.  
	
	\smallskip
	
	\noindent
	{\em Case $i_{k-1}=i$.}  Then by Observation~\ref{o1} we must have $i_{k-1} = i$ corresponding to a type 1 reduction, and  $i_k = i$ corresponding
	to a type 2 reduction. For the type 2 reduction to be possible we require that $\lambda^{\bi_{k-1}}_{i-1} > \lambda^{\bi_{k-1}}_{i}$. 
	We then have that $(\lambda^{\bi_{k-1}}_i,\lambda^{\bi_{k-1}}_{i+1},\lambda^{\bi_{k-1}}_{i+2}) =
	(2r'-2,2r'-2,2r'-4)$
	and  $(\lambda^{\bi_{k}}_i,\lambda^{\bi_{k}}_{i+1},\lambda^{\bi_{k}}_{i+2}) =
	(2r'-3,2r'-3,2r'-4)$, where $r' = r-i+1$. 
	
	Thus we have that $i_{k+1} \ge i+2$. 
	Since $\lambda^{\bj_{k-1}} = \lambda^{\bi_{k-1}}$ and $j_k > i_k$ we must have that  $(j_k,j_{k+1})$ is one
	of $(i+2,i+2)$ or $(i+1,b)$ where $b \ge i+2$.
	
	In the case $(j_k,j_{k+1}) = (i+1,b)$ we see that $h^{-1}(k)$ is a subset of $\{k,k+1,\dots, s\}$ such that 
	$\sum_{l \in h^{-1}(k)} i_l = i+1$. Since $i_{k+1} > i+1$ the only possible element of $h^{-1}(k)$ is $k$, but
	as $i_k = i < i+1$ we see that this is not possible.
	
	We show that the case $(j_k,j_{k+1}) = (i+2,i+2)$ is in fact also not possible. For this case we would have by Observation~\ref{o1} that 
	$(\lambda^{\bj_{k+1}}_i,\lambda^{\bj_{k+1}}_{i+1},\lambda^{\bj_{k+1}}_{i+2}) =
	(2r'-6,2r'-6,2r'-7)$, and also that $\lambda^{\bj_{k+1}}_{i-1} > 2r'-6$ and $\lambda^{\bj_{k+1}}_{i+3} = 2r'-7$. Thus we see that $j_l > i+2$ for all $l > k+1$. It follows that 
	$\lambda^{\bj}_{i-1} > \lambda^{\bj}_i = \lambda^{\bj}_{i+1} > \lambda^{\bj}_{i+2}$ and that $\lambda^{\bj}_i$ is even. This is a 
	contradiction to $\lambda^{\bj}$ being a rigid symplectic partition.
	
	Therefore, we have shown that $i_{k-1}=i$ is also impossible, and we have our contradiction in this case.
	
	\smallskip
	
	The contradictions in the two cases above imply that $i=j$ and therefore complete our inductive step. 
	
	\medskip
	
	(b) The proof for $G = \SO_{r^2}$ is very similar. We note that the case $r \le 4$ where $\lambda$ is one of $(7,5,3,1)$, $(5,3,1)$ or $(3,1)$ can be dealt with similarly 
	to Example~\ref{E:642}. So the assumption $r > 4$ can be made. The proof now proceeds in the same way just with $2r'+1$ in place of $2r'$. 
\end{proof}

For the proof
of this Theorem~\ref{T:nonconj} it is helpful to pick out the following useful property of the KS-algorithm.

\begin{ksprop} \label{KS:3diff}
	Let $\lambda$ be a symplectic partition or an orthogonal partition depending on which case we are considering.
	Let $\bi$ be a maximal admissible inc-sequence
	for $\lambda$, and let $i \in \Z_{\ge 1}$ with $i_1 \ge i$.
	Suppose that $\lambda_i > \lambda_{i+1} + 2$.    Then $i_1 = i$.
\end{ksprop}

\begin{proof}
	Suppose $i_1 > i$.  For $j \in \{1,\dots,s\}$ the only way we can have that
	$\lambda^{\bi_j}_i - \lambda^{\bi_j}_{i+1} < \lambda^{\bi_{j-1}}_i - \lambda^{\bi_{j-1}}_{i+1}$
	is if $i_j = i+1$ and this corresponds to a type 2 reduction in the KS-algorithm.  Then
	we have $\lambda^{\bi_j}_i - \lambda^{\bi_j}_{i+1} = \lambda^{\bi_{j-1}}_i - \lambda^{\bi_{j-1}}_{i+1} - 1$.
	By considering the parity of $\lambda^{\bi_j}_{i+1}$ there can be only
	one such type 2 reduction.  Thus we have that $\lambda^\bi_i \ge \lambda^\bi_{i+1} + 2$, which
	is contrary to $\lambda^\bi$ being a rigid partition.  Hence, we can conclude that $i_1 = i$.
\end{proof}

We are now ready to prove Theorem~\ref{T:nonconj} and note that
the idea of  proof of the theorem is to reduce to a case given in Lemma~\ref{L:specialpartitions}.

\begin{proof}[Proof of Theorem~\ref{T:nonconj}]
	We first cover the case $G = \SL_n$.  There is a unique rigid induction datum  $[\g_0,\O_0]$ for $\O$,
	as explained in \S\ref{ss:LSind}.  Thus trivially $\g_1 = \g_2$ and $\O_1 = \O_2$ in this case.
	
	\smallskip
	
	We move on to consider the case $G = \Sp_{2n}$ so $\g = \sp_{2n}$.
	Let $\lambda = (\lambda_1,...,\lambda_r)$ be the symplectic partition such that $\O =\O_\lambda$.
	
	By \cite[Lemma~8.4]{To} we can find $\bj = (j_1,\dots,j_t)$ such that the orbit
	$\{0\} \times \O_\lambda$ in $\sp_{\bj,2n}= \gl_{\bj} \times \g$ induces to an orbit $\O_{\tilde \lambda}$ in $\sp_{2|\bj|+2n}$ such that the symplectic partition $\tilde \lambda = (\tilde \lambda_1,...,\tilde \lambda_{\tilde r})$ of $2|\bj|+2n$ satisfies that $\tilde\lambda_i$ is even for all $i$ and
	$\tilde\lambda_i > \tilde \lambda_{i+1}$ for all $i = 1,...,\tilde r$.
	We have that $\gl_{\bj} \times \g_1$ and $\gl_{\bj} \times \g_2$ are Levi subalgebras of $\sp_{\bj,2n}$ and thus
	Levi subalgebras of $\sp_{2|\bj|+2n}$.  It follows that $[\gl_{\bj} \times \g_1,\{0\} \times \O_1]$ and
	$[\gl_{\bj} \times \g_2,\{0\} \times \O_2]$ are rigid induction data for $\O_{\tilde\lambda}$.
	
	We have  $\gl_{\bj} \times \g_1 \sub \gl_{\bj} \times \g_2$.
	Moreover $\g_1 = \g_2$ and $\O_1 = \O_2$ 
	if and only if $\gl_{\bj} \times \g_1 = \gl_{\bj} \times \g_2$ and $\{0\} \times \O_1 = \{0\} \times \O_2$.  By replacing $\lambda$ by $\tilde \lambda$, we can
	therefore assume that $\lambda$ satisfies that $\lambda_i$ is even for all $i$ and
	$\lambda_i > \lambda_{i+1}$ for all $i = 1,...,r$.
	
	Let $\bi' = (i'_1,...,i'_{s'})$ be the inc-sequence in which
	$i \in \{1,...,r\}$ occurs with multiplicity $(\lambda_i - \lambda_{i+1})/2 - 1$.
	Then we have that $\bi'$ is an admissible sequence for $\lambda$, for which reductions
	of the KS-algorithm are of type 1.  Further, we have that
	$\lambda^{\bi'} = (2r,2r-2,\dots,4,2)$ is a partition as in Lemma~\ref{L:specialpartitions}.
	
	Let $\bi$ be a maximal admissible sequence for $\lambda$.  Using KS-property~\ref{KS:order}
	we may first assume that $\bi$ is an inc-sequence without affecting $\lambda^\bi$ or $\sp_{\bi,2n_\bi}$ up to conjugacy.
	Then we observe that
	$\bi$ must start with at least $l := (\lambda_1 - \lambda_2)/2 - 1$ entries equal
	to $1$ by using KS-property~\ref{KS:3diff}.  We see that the maximal $k$ such that
	$i_k = 1$ is  $l$, $l+1$ or $l+2$, and then the next $(\lambda_2 - \lambda_3)/2 - 1$ entries in $\bi$ must be equal
	to $2$ by using KS-property~\ref{KS:3diff}.  Continuing this argument we see that
	for each $i$ there must be at least $(\lambda_i - \lambda_{i+1})/2 - 1$ entries equal to $i$
	in $\bi$.  In other words, $\bi'$ is a subsequence of $\bi$.
	Another application of KS-property~\ref{KS:order} allows us reorder $\bi$ to assume that $\bi_{s'} = \bi'$
	whilst not affecting $\sp_{\bi,2n_\bi}$ or $\lambda^\bi$.

	It now follows that there is a bijection from the rigid induction data for $\O_{\lambda} \sub \sp_{2n}$
	to the rigid induction data for $\O_{\lambda^{\bi'}} \sub \sp_{2n_{\bi'}}$.
	The bijection is given by $[\sp_{\bi,2n_\bi}, \{0\}^s \times \O_{\lambda^\bi}] \mapsto [\sp_{\bi'',2(n_{\bi'})_{\bi''}},\{0\}^{s''} \times \O_{(\lambda^{\bi'})^{\bi''}}]$,
	where $\bi'' = (i''_1,...,i''_{s''})$ is such that $\bi = (i'_1,...,i'_{s'},i''_1,...,i''_{s''})$; so that $(n_{\bi'})_{\bi''} = n_\bi$ and $(\lambda^{\bi'})^{\bi''} = \lambda^\bi$.
	
	We observe that $[\g_1,\O_1]$ and $[\g_2,\O_2]$ are rigid
	induction data for $\O_{\lambda^{\bi'}} \sub \sp_{\bi',n_{\bi'}}$.
	Thus by Lemma~\ref{L:specialpartitions} we obtain that $\g_1=\g_2$ and $\O_1 = \O_2$.
	This completes the proof for the symplectic case.
	
	\smallskip
	
	We end the proof by considering the case $G= \SO_N$.  Here we can proceed exactly as in the symplectic case.   The only minor changes
	required are to replace all occurrences of $\sp$ with $\so$, to take all entries of $\tilde \lambda$ to be odd then assume that all $\lambda_i$ are odd and distinct, and to say that $\lambda^{\bi'}$
	is equal to $(2r-1,2r-3,\dots,3,1)$.
\end{proof}

\begin{rmk} \label{R:nonconj}
	We note that the statement of Theorem~\ref{T:nonconj} holds more generally for any reductive algebraic group $G$ over $\C$.
	First, from the definition of Lusztig--Spaltenstein induction it is straightforward to see that the theorem
	reduces to the case where $G$ is simple and that it is independent of the isogeny class of $G$.  So we are just
	left to consider $G$ simple of exceptional type.
	
	The description of induced orbits and the Levi subalgebras from which they are induced for $G$ of exceptional type can be found in
	\cite[Tables~6--10]{EdG}.  The majority of induced orbits have a unique rigid induction datum, and so the statement
	of  Theorem~\ref{T:nonconj} holds trivially for these orbits. For all the other orbits a case by case analysis shows that
	there can never be containment of Levi subalgebras for distinct rigid induction data.  Thus the statement of Theorem~\ref{T:nonconj}
	does hold for $G$ of exceptional type.
	
	We also comment here that it would be interesting to find a general conceptual
	argument to prove this more general version of  Theorem~\ref{T:nonconj}.
\end{rmk}

\section{Finite \texorpdfstring{$W$}{W}-algebras and Losev--Premet ideals} \label{S:Walg}

For this section $G$ is any connected reductive algebraic group over $\C$.  We let $e \in \g$ be
a nilpotent element and let $\O = \O_e$.  We recall the required background on
the finite $W$-algebra $U(\g,e)$, move on to discuss Losev's parabolic induction
and then provide the setup required to state Theorem~\ref{T:surj}.  As a general reference for this material on
finite $W$-algebras we refer for example to \cite{LoICM}, and note that this covers all of the fundamental results
recapped below.

To define $U(\g,e)$ we extend $e$ to an $\sl_2$-triple $(e,h,f)$ in $\g$ and
consider the $\ad h$-grading $\g = \bigoplus_{i\in \Z} \g(i)$
of $\g$.  We write $\chi := (e, \cdot) \in \g^*$, where we recall that
$(\cdot\,,\cdot)$ is a nondegenerate invariant symmetric bilinear form on $\g$.
There is a symplectic form on $\g(-1)$ defined by $(x,y) \mapsto \chi([x,y])$, and we choose
a Lagrangian subspace $\l \sub \g(-1)$.  We let $\m := \l \oplus \bigoplus_{i < -1} \g(i)$ and
$\m_\chi := \{ x - \chi(x) \mid x\in \m\} \sub U(\g)$.  Let $Q_\chi := U(\g) / U(\g) \m_\chi$,
which is a (left) $U(\g)$-module called the {\em generalised Gelfand--Graev module} associated to  $\chi$.
The {\em finite $W$-algebra} corresponding to $\g$ and $e$ is defined as
$$
U(\g,e) := \End_{U(\g)}(Q_\chi)^\op.
$$
It is known that up to isomorphism $U(\g,e)$ does not depend on the choice of $\l$
or on the choice of $\sl_2$-triple, and in fact only depends on the
$G$-orbit of $e$.
\begin{rmk}\label{r: dir sum}
	If $G_1$ and $G_2$ are both connected reductive algebraic groups over $\C$ with Lie algebras $\g_1$ and $\g_2$, then $G_1\times G_2$ is a connected reductive algebraic group over $\C$ with Lie algebra $\g_1\oplus \g_2$. Letting $e_1$ be a nilpotent element of $\g_1$ and $e_2$ a nilpotent element of $\g_2$, we get that $e_1+e_2$ is a nilpotent element of $\g_1\oplus\g_2$. From the construction above, and using that $U(\g_1\oplus\g_2)\cong U(\g_1)\otimes U(\g_2)$, it is straightforward to see that $U(\g_1\oplus\g_2,e_1+e_2)\cong U(\g_1,e_1)\otimes U(\g_2,e_2)$.
\end{rmk}
By using an alternative definition of $U(\g,e)$, one can get an action of $(G^e)^h$ on $U(\g,e)$ (see \cite[\S 1.2]{GG} and \cite[\S 2.2]{PrJo}). This induces an action of $(G^e)^h$ on two-sided ideals of $U(\g,e)$; in fact, since $((G^e)^h)^\circ$ preserves any two-sided ideal (see \cite[Proof of Corollary 2.1]{PrJo}), this descends to an action of the component group of $((G^e)^h)^\circ$ on the set of two-sided ideals of $U(\g,e)$. A $U(\g,e)$-module $M$ may also be twisted by $g\in (G^e)^h$ to a module ${}^gM$, which coincides with $M$ as a vector space but on which an element $u\in U(\g,e)$ acts as $g^{-1}\cdot u$ does on $M$. Note that $\Ann_{U(\g,e)}({}^gM)=g\cdot \Ann_{U(\g,e)}(M)$.

The generalised Gelfand-Graev module $Q_\chi$ is a left $U(\g)$-module and a right $U(\g,e)$-module.
Let $\Wh_\chi(U(\g))$ denote the full subcategory of $U(\g)$-modules on which $\m_\chi$
acts locally nilpotently.
The functor
\begin{equation} \label{e:Skryabinsequiv}
	Q_\chi\otimes_{U(\g,e)} (-) : U(\g,e)\lmod \isoto \Wh_\chi(U(\g))
\end{equation}
is an equivalence of categories known as {\em Skryabin's equivalence}.
This leads to a close relationship between finite dimensional simple $U(\g,e)$-modules and the set $\Prim_\O U(\g)$, as 
established within \cite[Theorem~1.2.2]{LoQS} and which we now describe.

For $M$ a finite dimensional simple $U(\g,e)$-module,
$\Ann_{U(\g)}(Q_\chi\otimes_{U(\g,e) }M)$ lies in $\Prim_\O U(\g)$; conversely, any $I \in \Prim_\O(U(\g))$ is of the
form $\Ann_{U(\g)}(Q_\chi\otimes_{U(\g,e) }M)$ for some finite dimensional simple $U(\g,e)$-module $M$.
Moreover, by \cite[Conjecture 1.2.1]{LoDuke}, two finite dimensional simple $U(\g,e)$-modules $M$ and $N$ satisfy $\Ann_{U(\g)}(Q_\chi\otimes_{U(\g,e) }M) = \Ann_{U(\g)}(Q_\chi\otimes_{U(\g,e) }N)$
if and only if $M$ is isomorphic to a twist of $N$ by an action
of the component group
of $(G^e)^h$ on $U(\g,e)$.
Lastly, from \cite[\textsection 1.2]{PT} and the references therein we recall that for a $1$-dimensional $U(\g,e)$-module $M$ it is known that $\Ann_{U(\g)}(Q_\chi\otimes_{U(\g,e) }M)$ is a completely
prime primitive ideal, i.e. it lies in $\Prim_\O^1(U(\g))$.

The $1$-dimensional representations of $U(\g,e)$
correspond to the points of
the affine variety
\begin{equation} \label{e:Ege}
	\cE(\g,e) := \Spec_{\mathrm{max}}(U(\g,e)^{\ab}),
\end{equation}
where $U(\g,e)^{\ab} := U(\g,e) / ([a,b] \mid a,b \in U(\g,e))$. Since there is a bijection between maximal ideals of $U(\g,e)^\ab$ and $1$-dimensional representations of $U(\g,e)$, the action of the component group of $(G^e)^h$ on the former set induces one on the latter; this is compatible with the twisting operation discussed earlier. It follows from the discussion above that there is a map
\begin{equation} \label{e:SonE}
	\sS=\sS(\g,e) :=  \Ann_{U(\g)}(Q_\chi\otimes_{U(\g,e)} (-)) :\cE(\g,e) \to \Prim_\O^1(U(\g)),
\end{equation}
whose fibres are orbits under the action of the component group of $(G^e)^h$.

\begin{dfn}
	An ideal $I$ in $U(\g)$ is called a {\em Losev-Premet ideal} if it lies in the image of the map $\sS$.
\end{dfn}

To discuss parabolic induction of Losev-Premet ideals, we first need to refine our earlier discussion of rigid induction data. Recall (see \S\ref{ss:LSind}) that $\cR(\g,\O)$ denotes the finite set of rigid induction data for $\O$. Let us now enumerate this set as $\cR(\g,\O) = \{[\g_i, \O_i] \mid i=0,...,m\}$. From now on, we consider each rigid induction datum $[\g_i,\O_i]$ to come equipped with a fixed representative $(\g_i,\O_i)$ such that $\g_i$ is a standard Levi subalgebra, a fixed element $e_i\in\O_i$, and a parabolic subalgebra $\p_i$ of $\g$ of which $\g_i$ is a Levi factor. 

Now, fix $i\in\{0,\ldots,m\}$. In \cite[Theorem 1.2.1]{LoPI} Losev established a dimension preserving parabolic induction functor from finite dimensional
$U(\g_i,e_i)$-modules to finite dimensional $U(\g,e)$-modules.
At the level of $1$-dimensional modules this induces
a finite morphism
\begin{equation} \label{e:Losevsmorphism}
	\Phi_i: \cE(\g_i,e_i) \to \cE(\g,e),
\end{equation}
see \cite[Theorem~6.5.2]{LoPI}. The construction of this morphism depends on a choice of a parabolic subalgebra; in our set-up, we always use $\p_i$. This map $\Phi_i$ fits into the commutative diagram
\begin{equation} \label{e:inductiondiagram}
	\begin{array}{c}
		\xymatrix{
			\cE(\g_i,e_i) \ar@{->}[rrr]^{\sS_i} \ar@{->}[d]^{\Phi_i} & & & \Prim^1_{\O_i} U(\g_i) \ar@{->}[d]^{\Ind_{\p_i}^\g}\\
			\cE(\g,e) \ar@{->}[rrr]^{\sS} & & & \Prim^1_\O U(\g), }
	\end{array}
\end{equation}
where $\sS_i:=\sS(\g_i,e_i)$. Note that the commutativity of this diagram particularly implies that Losev-Premet ideals parabolically induce to Losev-Premet ideals (induction of ideals is recapped in \S\ref{ss:primitive}).

Combining the diagrams \eqref{e:inductiondiagram} for each $i=0,\ldots,m$, we obtain the commutative diagram
\begin{eqnarray}\label{e:diag}
	\begin{array}{c}
		\xymatrix{
			\bigsqcup_{i=0}^m \cE(\g_i,e_i) \ar@{->}[rrr]^{\sS_{\rig}} \ar@{->}[d]^{\Phi} & & &  \bigsqcup_{i=0}^{m}\Prim_{\O_i}^1 U(\g_i) \ar@{->}[d]^{\Ind^{\g}}\\
			\cE(\g,e) \ar@{->}[rrr]^{\sS} & & & \Prim_{\O}^1(\g,e).
		}
	\end{array}
\end{eqnarray}

Here we write $\Phi:=\bigsqcup_{i=0}^m \Phi_i$, $\sS_{\rig} := \bigsqcup_{i=0}^m \sS_{i}$, and $\Ind^{\g} := \bigsqcup_{i=0}^m \Ind_{\p_i}^{\g}$. Note that $\Phi$ in diagram \eqref{e:diag} is a finite morphism.
The following theorem is the main technical result of the paper, and immediately implies Theorem~\ref{T:maintheorem}.

\begin{theorem} \label{T:surj}
	If $G$ is a simple algebraic group over $\C$ of classical type, then $\Phi$ in \eqref{e:diag} is surjective.
\end{theorem}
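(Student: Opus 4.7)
The plan is to combine three ingredients already in place in the paper: Proposition~\ref{P:dimpresbij}, which says that the multiset of dimensions of irreducible components of $\bigsqcup_{i=0}^m \cE(\g_i,e_i)$ equals the multiset of dimensions of irreducible components of $\cE(\g,e)$; Proposition~\ref{P:Int}, which says that distinct irreducible components of $\bigsqcup_{i=0}^m \cE(\g_i,e_i)$ are sent by $\Phi$ into distinct irreducible components of $\cE(\g,e)$; and the fact noted just below diagram~\eqref{e:diag} that $\Phi$ is a finite morphism.

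First I would observe that, since $\Phi$ is finite, for every irreducible component $C$ of the source the image $\Phi(C)$ is a closed irreducible subset of $\cE(\g,e)$ with $\dim \Phi(C) = \dim C$. Thus the assignment $C \mapsto \widetilde{C}$, where $\widetilde{C}$ denotes the unique irreducible component of $\cE(\g,e)$ containing $\Phi(C)$, is well-defined on the set of irreducible components of $\bigsqcup_{i=0}^m \cE(\g_i,e_i)$. By Proposition~\ref{P:Int}, this assignment is injective.

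Next, Proposition~\ref{P:dimpresbij} gives that source and target have the same number of irreducible components, so the injection $C \mapsto \widetilde{C}$ is a bijection of finite sets. For each $C$ we have $\dim \Phi(C) = \dim C \le \dim \widetilde{C}$, and summing over all components gives
\[
\sum_C \dim C \;\le\; \sum_C \dim \widetilde{C},
\]
with equality by the equality of multisets in Proposition~\ref{P:dimpresbij}. Hence every individual inequality is an equality, so $\Phi(C)$ is a closed irreducible subset of $\widetilde{C}$ of the same dimension as $\widetilde{C}$, and therefore $\Phi(C) = \widetilde{C}$.

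Since $\cE(\g,e)$ is the union of its irreducible components and each of them is of the form $\widetilde{C} = \Phi(C)$ for some source component $C$, it follows that $\Phi$ is surjective, proving Theorem~\ref{T:surj}. The real content of the argument sits in the two auxiliary results it invokes: the hard part is Proposition~\ref{P:Int}, which relies on Theorem~\ref{T:nonconj} to prevent two distinct rigid induction data from producing overlapping images in $\cE(\g,e)$, while Proposition~\ref{P:dimpresbij} rests on a careful comparison of sheet-theoretic dimension data on both sides; once these are in hand, the deduction of surjectivity above is a short formal consequence.
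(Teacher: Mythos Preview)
Your overall strategy matches the paper's: combine Proposition~\ref{P:Int}, Proposition~\ref{P:dimpresbij}, and the finiteness of $\Phi$. There is, however, a genuine gap in how you invoke Proposition~\ref{P:Int}. That proposition asserts $\Phi(\cE(\g_i,e_i)) \nsubseteq \bigcup_{j\ne i} \Phi(\cE(\g_j,e_j))$, i.e.\ that there are no containments among the \emph{images} of the source components. It does \emph{not} say that distinct source components land in distinct irreducible components of $\cE(\g,e)$, which is how you paraphrase it. Two incomparable closed irreducible subsets may well lie inside the same target component, so Proposition~\ref{P:Int} alone does not make your assignment $C \mapsto \widetilde C$ injective; and your sum-of-dimensions step genuinely needs that map to be a bijection in order to identify $\sum_C \dim\widetilde C$ with the sum over all target components. (A minor side issue: $\widetilde C$ need not be unique, though one may simply choose one.)

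The repair is short and is essentially what the paper does. Use Proposition~\ref{P:Int} to conclude that the closed irreducible sets $\Phi(\cE(\g_i,e_i))$ are exactly the irreducible components of the image $\Phi\bigl(\bigsqcup_i \cE(\g_i,e_i)\bigr)$, since none is contained in the union of the others. Proposition~\ref{P:dimpresbij} then says this image has the same multiset of component dimensions as $\cE(\g,e)$ itself. Finally, a closed subvariety $Z\subseteq Y$ whose components have the same dimension multiset as those of $Y$ must equal $Y$: arguing by descending dimension, each top-dimensional component of $Z$ coincides with a top-dimensional component of $Y$; a lower-dimensional component of $Z$ cannot lie inside one of these (else it would be contained in another component of $Z$, contradicting irredundancy), so it lies inside a lower-dimensional component of $Y$, and one iterates. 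This yields the surjectivity of $\Phi$.
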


We prove Theorem~\ref{T:surj} in full generality in \S\ref{S:FWACLA}, but to introduce some of the ideas of the proof we consider first the case of $m=0$, which is already known due to \cite[Theorem~5]{PT}. Note that this case corresponds to $\O$ having a unique rigid induction datum; by \S\ref{ss:sheets}, this is the same as $\O$ lying on a unique sheet. The argument follows the proof of \cite[Theorem~5]{PT}.

\begin{prop} \label{p:ASurj}
	Let $G$ be a simple algebraic group over $\C$ of classical type and let $\O$ be a nilpotent orbit in $\g$ with unique rigid induction datum $[\g_0,\O_0]$. Fix $e\in\O$. Then $U(\g,e)^{\ab}$ is a polynomial ring in $\dim \z(\g_0)$ variables and the map $\Phi_0$ in \eqref{e:Losevsmorphism} is surjective.
\end{prop}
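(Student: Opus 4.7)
The plan is to reproduce the strategy of \cite[Theorem~5]{PT} in the language of this paper. By \S\ref{ss:sheets}, the assumption that $\O$ admits a unique rigid induction datum $[\g_0,\O_0]$ is equivalent to $\O$ lying on a single sheet $\cS = \bar{\cD}^{\reg}(\g_0,\O_0)$, of dimension $\dim \O + \dim \z(\g_0)$. The argument splits into a computation of $U(\g,e)^{\ab}$ and a dimension comparison for $\Phi_0$.

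For the first assertion I would equip $U(\g,e)$ with the Kazhdan filtration, so that $\gr U(\g,e)$ identifies with $\C[e+\g^{f}]$, the coordinate ring of the Slodowy slice. Premet's analysis of associated varieties of two-sided ideals in $U(\g,e)$ identifies the associated variety of the commutator ideal in $U(\g,e)$ with the intersection of $e+\g^{f}$ with the union of all sheets of $\g$ passing through $\O$. Under our uniqueness hypothesis this reduces to $\cS \cap (e+\g^{f})$, the transverse slice to $\O$ inside $\cS$, which is a conical subvariety of $e+\g^{f}$ of dimension $\dim \z(\g_0)$. Consequently $\gr U(\g,e)^{\ab}$ embeds into the coordinate ring of this slice. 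A dimension count, combined with the standard upper bound $\dim \cE(\g,e) \le \dim \z(\g_0)$ valid for any sheet through $\O$, forces equality and shows that the coordinate ring of the slice is polynomial in $\dim \z(\g_0)$ generators. Since the Kazhdan filtration has non-negative degrees, lifting a polynomial generating set yields $U(\g,e)^{\ab} \cong \C[z_{1},\ldots,z_{\dim \z(\g_0)}]$.

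For the surjectivity of $\Phi_0$ I would apply the identical analysis to $(\g_0,e_0)$: since $\O_0$ is rigid in $\g_0$, its unique rigid induction datum in $\g_0$ is $[\g_0,\O_0]$ itself, with the same center $\z(\g_0)$. Therefore $\cE(\g_0,e_0) \cong \A^{\dim \z(\g_0)} \cong \cE(\g,e)$. The morphism $\Phi_0$ in \eqref{e:Losevsmorphism} is finite, hence closed, with image of dimension equal to $\dim \cE(\g_0,e_0) = \dim \cE(\g,e)$. Since $\cE(\g,e)$ is irreducible (being an affine space), a closed subvariety of full dimension must coincide with $\cE(\g,e)$, giving surjectivity of $\Phi_0$.

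The main obstacle is the identification of the transverse slice $\cS \cap (e+\g^{f})$ and the corresponding saturation of $\gr U(\g,e)^{\ab}$ inside its coordinate ring. The inclusion of $\gr U(\g,e)^{\ab}$ into $\C[\cS \cap (e+\g^{f})]$ is a relatively formal consequence of the Poisson-commutativity of this subalgebra together with Premet's description of symplectic leaves; matching dimensions, ruling out further relations, and confirming that the slice has polynomial coordinate ring require the detailed classical-type computations carried out in \cite[\S 4]{PT}.
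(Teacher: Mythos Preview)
Your approach is essentially that of the paper, which in turn follows \cite[Theorem~5]{PT}. The paper's proof is more streamlined only because it invokes \cite[Theorem~1]{PT} and \cite[Theorem~4(ii)]{PT} directly as black boxes to conclude that $U(\g,e)^{\ab}$ is a polynomial ring in $c(e)=\dim\bar{\cD}^{\reg}(\g_0,\O_0)-\dim\O=\dim\z(\g_0)$ variables, whereas you sketch the geometric content of those results (Kazhdan filtration, Slodowy slice, sheet intersection). The final step --- a finite morphism between affine spaces of equal dimension is closed with full-dimensional image, hence surjective --- is identical.

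One point where the paper is cleaner: for $\cE(\g_0,e_0)$ you propose to ``apply the identical analysis to $(\g_0,e_0)$'', but your first-part analysis implicitly uses that the ambient group is \emph{simple} classical (that is where the computations of \cite[\S4]{PT} live), while $\g_0$ is only reductive. The paper avoids this by using the tensor decomposition $U(\g_0,e_0)\cong S(\z(\g_0))\otimes U([\g_0,\g_0],e_0)$ from Remark~\ref{r: dir sum}, and then applying \cite[Theorem~1]{PT} to the simple factor $[\g_0,\g_0]$ (where $e_0$ is rigid) to get $U([\g_0,\g_0],e_0)^{\ab}\cong\C$, whence $U(\g_0,e_0)^{\ab}\cong S(\z(\g_0))$ directly. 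Your route would work after making this reduction explicit, so this is a presentational gap rather than a mathematical one.
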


\begin{proof}
	
	By \cite[Theorem~1]{PT} and \cite[Theorem~4(ii)]{PT}, $U(\g,e)^{\ab}$ is a polynomial algebra in $c(e)$ variables, where $c(e)=\dim \bar{\cD}^{\reg}(\g_0, \O_0)-\dim \O$. This is equal to $\dim\z(\g_0)$ by the discussion at the end of \S\ref{ss:sheets} (citing \cite[Satz 4.5]{BK}), and thus the first claim is proved.
	
	On the other hand it follows from Remark~\ref{r: dir sum} that
	$U(\g_0, e_0) \cong S(\z(\g_0)) \otimes U([\g_0, \g_0], e_0)$ and thus $U(\g, e_0)^{\ab} \cong S(\z(\g_0)) \otimes U([\g_0, \g_0], e_0)^{\ab}$. Applying \cite[Theorem~1]{PT}
	again, we see that $U([\g_0, \g_0], e_0)^{\ab}\cong \C$ and thus that $U(\g, e_0)^{\ab} \cong S(\z(\g_0))$ is a polynomial algebra in $\dim\z(\g_0)$ variables.
	
	We therefore conclude that $\Phi_0:\cE(\g_0,e_0)\to\cE(\g,e)$ is a finite morphism between affine spaces of the same dimension. Since finite morphisms are closed,
	$\Phi$ is surjective.
\end{proof}

As explained in \S\ref{ss:LSind},
a nilpotent orbit $\O$ in $\sl_n$ has a unique rigid induction datum and so lies in a unique sheet. Proposition~\ref{p:ASurj} is thus sufficient to prove Theorem~\ref{T:surj} in type {\sf A} and we therefore get the following result.

\begin{cor} \label{C:typeA}
	If $G$ is a simple algebraic group over $\C$ of type {\sf A} then $\Phi$ in \eqref{e:diag} is surjective.
\end{cor}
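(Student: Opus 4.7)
The plan is essentially to observe that Corollary~\ref{C:typeA} is a direct application of Proposition~\ref{p:ASurj}, together with the uniqueness of rigid induction data in type $\sf A$ that was recorded in \S\ref{ss:LSind}.

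More precisely, I would begin by recalling from \S\ref{ss:LSind} that for $G = \SL_n$ the only rigid nilpotent orbit in any Levi subalgebra $\sl_\bi$ is the zero orbit, and that $\Ind_{\sl_\bi}^{\sl_n}(\{0\}) = \O_{\bi^{\mathrm t}}$. Since the map $\bi \mapsto \bi^{\mathrm t}$ on inc-sequences with $|\bi|=n$ is a bijection onto partitions of $n$, every nilpotent orbit $\O$ in $\sl_n$ admits exactly one rigid induction datum. In the notation of diagram \eqref{e:diag}, this means $m=0$, so that $\Phi = \Phi_0$ and the disjoint union on the left of \eqref{e:diag} consists of the single space $\cE(\g_0,e_0)$.

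With this reduction in hand, I would simply invoke Proposition~\ref{p:ASurj}: since $\O$ has a unique rigid induction datum $[\g_0,\O_0]$, the proposition yields that $\Phi_0 : \cE(\g_0,e_0) \to \cE(\g,e)$ is surjective, which is exactly the claim that $\Phi$ in \eqref{e:diag} is surjective. This completes the proof.

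There is essentially no obstacle here; the corollary is a clean specialization of Proposition~\ref{p:ASurj}, and the only ingredient beyond that proposition is the classical fact about uniqueness of rigid induction data in type $\sf A$. The substantive content of the argument lives entirely in Proposition~\ref{p:ASurj} (which in turn leans on \cite[Theorem~1]{PT} and \cite[Theorem~4(ii)]{PT}) and in the combinatorial observation from \S\ref{ss:LSind}, so the writeup should amount to little more than a citation of these two facts.
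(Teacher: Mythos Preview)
Your proposal is correct and matches the paper's own argument essentially verbatim: the paper likewise notes that every nilpotent orbit in $\sl_n$ has a unique rigid induction datum (as in \S\ref{ss:LSind}) and then appeals directly to Proposition~\ref{p:ASurj}. There is nothing to add or change.
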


For types {\sf B}, {\sf C} and {\sf D} we may have $m>0$ and so we need to work a bit harder; we do so in the next section.

\section{Proof of Theorem~\ref{T:surj}}
\label{S:FWACLA}
Thanks to Corollary~\ref{C:typeA}, what remains is to prove Theorem~\ref{T:surj} when $G$ is $\Sp_{2n}$ or $\SO_N$. For brevity we tackle only $G=\Sp_{2n}$ in this section, but the argument works almost identically for $G=\SO_N$.

\subsection{Preliminary observations}\label{ss:PrelObs}

Before getting into the substance of the proof of Theorem~\ref{T:surj} it will be useful to make a handful of observations about one-dimensional representations and central characters in the case of $\g=\sp_{2n}$ and its Levi subalgebras (analogous results hold for $\so_N$). We do so here.

As discussed in \S\ref{ss:paralevi}, the standard Levi subalgebras of $\sp_{2n}$ are all of the form $$\sp_{\bi,2n_\bi}=\gl_{i_1}\times\cdots\times\gl_{i_s}\times \sp_{2n_\bi}$$
for $\bi=(i_1,\ldots,i_s)$ an inc-sequence with $|\bi| \le n$, and with $n_\bi := n-|\bi|$. In particular, each $\sp_{\bi,2n_\bi}$ contains $\t=\Lie(T)$, where $T$ is the maximal torus consisting of diagonal matrices in $\Sp_{2n}$. Furthermore, the rigid induction data are of the form $[\sp_{\bi,2n_\bi}, \{0\} \times \O_0]$, where $\O_0$ is a rigid nilpotent orbit in $\sp_{2n_\bi}$.

For such a rigid induction datum let $e_0 \in \O_0$; we sometimes abuse notation to identify $e_0\in\O_0$ with $(0,e_0)\in\{0\}\times \O_0$, thus viewing it as a nilpotent element in $\sp_{\bi,2n_{\bi}}$.  As algebras, we have by Remark~\ref{r: dir sum}
\begin{eqnarray}
	\label{e:FWAdecomp}
	U(\sp_{\bi,2n_{\bi}}, e_0) \cong U(\gl_{i_1}) \otimes \cdots \otimes U(\gl_{i_s}) \otimes U(\sp_{2n_\bi}, e_0)
\end{eqnarray}
and thus
\begin{eqnarray}
	\label{e:FWAdecompAb}
	U(\sp_{\bi,2n_{\bi}}, e_0)^{\ab} \cong S(\z(\gl_{i_1})) \otimes \cdots \otimes S(\z(\gl_{i_s})) \otimes U(\sp_{2n_\bi}, e_0)^{\ab}.
\end{eqnarray}
Noting that $\z(\sp_{\bi,n_{\bi}})=\z(\gl_{i_1})\times\cdots\times \z(\gl_{i_s})\times\{0\}$ and that \cite[Theorem 1]{PT} implies that $U(\sp_{2n_\bi}, e_0)^{\ab}\cong\C$ (since $e_0$ is rigid in the simple Lie algebra $\sp_{2n_\bi}$), we can simplify this as
\begin{eqnarray}
	\label{e:FWAdecompAb2}
	U(\sp_{\bi,2n_{\bi}}, e_0)^{\ab} \cong S(\z(\sp_{\bi,2n_\bi})).
\end{eqnarray}

In particular, 	$U(\sp_{\bi,2n_{\bi}}, e_0)^{\ab}$ is a polynomial algebra in $\dim\z(\sp_{\bi,2n_\bi})=s$ variables and $\cE(\sp_{\bi,2n_{\bi}}, e_0)$ is $s$-dimensional affine space.

It will be helpful for us to give a more explicit description of $\cE(\sp_{\bi,2n_{\bi}}, e_0)$; we do so by turning our attention to the central characters corresponding to one-dimensional $U(\sp_{\bi,2n_\bi},e_0)$-modules.  We have seen that there is a unique $\C$-algebra homomorphism $\eta=\eta(\bi,e_0) : U(\sp_{2n_\bi},e_0) \to \C$, and we denote the corresponding one-dimensional
$U(\sp_{2n_\bi},e_0)$-module by $\C_\eta$.
Thanks to \eqref{e:FWAdecompAb}, for each $\lambda\in \z(\sp_{\bi,2n_\bi})^*$ there is a unique one-dimensional
representation $\C_\eta^\lambda$ of $U(\sp_{\bi,2n_\bi},e_0)$ on which $U(\gl_{i_1}) \otimes \cdots \otimes U(\gl_{i_s})$
acts via $\lambda$ and $U(\sp_{2n_\bi}, e_0)$ acts via $\eta$.
Therefore,
\begin{equation} \label{e:Ele0}
	\cE(\sp_{\bi,2n_\bi}, e_0) = \{ \C_\eta^\lambda \mid \lambda \in \z(\sp_{\bi,2n_\bi})^*\}.
\end{equation}

The map $\sS_0:=\sS(\sp_{\bi,2n_\bi}, e_0)$ then defines for each $\lambda \in \z(\sp_{\bi,2n_\bi})^*$ a Losev-Premet ideal $$I_\eta^\lambda := \sS_0(\C_\eta^\lambda)\subseteq U(\sp_{\bi,2n_{\bi}}).$$ Using Duflo's theorem, there exists $\mu=\mu(\bi,e_0) \in \t^*$ such that $I_\eta^0 = \Ann_{U(\sp_{\bi,2n_\bi})} L(\mu)$.
It is
not hard to see that
\begin{eqnarray} \label{e:relabellingistwisting}
	I_\eta^\lambda = \Ann_{U(\sp_{\bi,2n_\bi})} (L(\mu) \otimes \C_\lambda),
\end{eqnarray}	
where $\C_\lambda$ is the one-dimensional $U(\sp_{\bi,2n_\bi})$-module upon which $U(\sp_{2n_\bi})$ acts trivially and
$U(\gl_{i_1}) \otimes \cdots \otimes U(\gl_{i_s})$ acts via $\lambda$. Let $W_\bi$ be the Weyl group corresponding to $(\Sp_{\bi,2n_\bi},T)$ and let $\ch_\bi$ be the corresponding central character map from \S\ref{ss:primitive}.  We observe that $L(\mu) \otimes \C_\lambda$ is a highest weight $U(\sp_{\bi,2n_\bi})$-module with highest weight $\mu+\lambda$; as discussed in \S\ref{ss:primitive}, this implies that  $\ch_\bi(I_\eta^\lambda)=W_\bi \bullet (\mu+\lambda)$. Thus, using  \eqref{e:centralchars}, we have:

\begin{lemma} \label{L:lemmawithsomename}
	$\ch(\Ind_{\p_\bi}^{\sp_{2n}}(I_\eta^\lambda)) = W\bullet (\mu+\lambda).$
\end{lemma}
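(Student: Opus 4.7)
The plan is direct: Lemma~\ref{L:lemmawithsomename} amounts to combining the explicit description of $I_\eta^\lambda$ recorded in \eqref{e:relabellingistwisting} with the general compatibility \eqref{e:centralchars} between parabolic induction of ideals and central characters. Both ingredients have been assembled in the paragraphs leading up to the statement, so the argument is a short chase.

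More concretely, I would first use \eqref{e:relabellingistwisting} to identify $I_\eta^\lambda$ as the annihilator in $U(\sp_{\bi,2n_\bi})$ of the simple $U(\sp_{\bi,2n_\bi})$-module $L(\mu) \otimes \C_\lambda$. This module is a simple highest weight module with respect to the Borel subalgebra $\b \cap \sp_{\bi,2n_\bi}$ of $\sp_{\bi,2n_\bi}$: its highest weight is $\mu + \lambda$, since $L(\mu)$ has highest weight $\mu$ by the choice of $\mu = \mu(\bi,e_0)$, and $\C_\lambda$ is the one-dimensional module on which $U(\sp_{2n_\bi})$ acts trivially while $U(\gl_{i_1}) \otimes \cdots \otimes U(\gl_{i_s})$ acts by the character $\lambda \in \z(\sp_{\bi,2n_\bi})^*$, which we extend by zero to $\t^*$. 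The description of central characters of annihilators of highest weight modules recalled in \S\ref{ss:primitive} then yields
$$\ch_\bi(I_\eta^\lambda) = W_\bi \bullet (\mu + \lambda).$$

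It then remains only to apply \eqref{e:centralchars} with $I_0 := I_\eta^\lambda$ and $\p = \p_\bi$ to obtain
$$\ch(\Ind_{\p_\bi}^{\sp_{2n}}(I_\eta^\lambda)) \;=\; \pi(\ch_\bi(I_\eta^\lambda)) \;=\; \pi(W_\bi \bullet (\mu + \lambda)) \;=\; W \bullet (\mu + \lambda),$$
where $\pi : \t^*/(W_\bi,\bullet) \onto \t^*/(W,\bullet)$ is the natural projection induced by the inclusion $W_\bi \subseteq W$. No genuine obstacle arises; the lemma is essentially a bookkeeping exercise that packages the two ingredients already laid out immediately before its statement, and the only small subtlety is keeping track of the fact that the $W_\bi$-dot-orbit of $\mu+\lambda$ in $\t^*$ automatically maps to the $W$-dot-orbit of the same element under $\pi$.
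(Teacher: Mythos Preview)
Your proposal is correct and mirrors the paper's own argument essentially verbatim: the paper derives the lemma directly from \eqref{e:relabellingistwisting}, the observation that $L(\mu)\otimes\C_\lambda$ is a highest weight module of highest weight $\mu+\lambda$, and then \eqref{e:centralchars}. The only superfluous detail is your assertion of simplicity of $L(\mu)\otimes\C_\lambda$, which (while true) is not needed for the central-character computation.
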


\begin{rmk}
	When $\g$ is any simple Lie algebra over $\C$ with rigid induction datum $[\g_0,\O_0]$, the decomposition $\g_0=\z(\g_0)\times [\g_0,\g_0]$ can be used to obtain analogues of the above results so long as $U([\g_0,\g_0],e_0)^{\ab}\cong \C$. Reducing to the case where $[\g_0,\g_0]$ is simple, this holds in all classical cases by \cite{PT} and in all but 6 exceptional types by \cite{PrMF} -- using Bala-Carter notation, these 6 cases correspond to the nilpotent orbits $\tilde{A}_1$ in $G_2$, $\tilde{A}_2+A_1$ in $F_4$, $(A_3+A_1)'$ in $E_7$, and $A_3+A_1$, $A_5+A_1$ and $D_5(a_1)+A_2$ in $E_8$. In each of these cases, $U([\g_0,\g_0],e_0)^{\ab}\cong \C\times \C$ by \cite[Theorem A]{PS} and there are thus two one-dimensional representations $\eta_1,\eta_2:U([\g_0,\g_0],e_0)\to\C$. One then gets that $$\cE(\g_0, e_0) = \{ \C_{\eta_i}^\lambda \mid i=1,2\mbox{ and }\lambda \in \z(\g_0)^*\}.$$  All the other observations work as for $\sp_{2n}$, except that $\mu$ will depend on which $\eta_i$ is being considered.
\end{rmk}

The final result we need is the following simple lemma. Recalling that $W$ acts on $\t^*$ via the dot-action, let us write $V/(W,\bullet)$ for the image of any subset $V$ of $\t^*$ under the surjection $\t^*\twoheadrightarrow \t^*/(W,\bullet)$.

\begin{lemma} \label{GroupAct}
	Let $\mu_0,\ldots,\mu_m\in \t^{*}$ and let $\z_0,\ldots,\z_m \sub \t^{*}$ be vector subspaces. If
	$(\mu_0+\z_0)/(W,\bullet)$ is contained in $\bigcup_{i=1}^m(\mu_i+\z_i)/(W,\bullet)$ then
	there exists $1\leq i_0\leq m$ and $w_0\in W$ such that $\z_0\sub w_0(\z_{i_0})$.
\end{lemma}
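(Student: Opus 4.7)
The plan is to unravel the condition on dot-orbits into a statement about affine subspaces of $\t^*$ and then exploit irreducibility. First I would observe that the hypothesis $(\mu_0+\z_0)/(W,\bullet) \sub \bigcup_{i=1}^m (\mu_i+\z_i)/(W,\bullet)$ is equivalent to the inclusion
\[
\mu_0 + \z_0 \;\sub\; \bigcup_{i=1}^m \bigcup_{w\in W} w\bullet(\mu_i + \z_i)
\]
of subsets of $\t^*$. Using the definition of the dot-action, for each $i$ and $w$ we have
\[
w\bullet(\mu_i + \z_i) \;=\; w(\mu_i + \z_i + \rho) - \rho \;=\; (w\bullet\mu_i) + w(\z_i),
\]
which is an affine subspace of $\t^*$, and in particular is closed in the Zariski topology.

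Next, since $W$ is finite and the index $i$ ranges over a finite set, the right-hand side above is a finite union of Zariski-closed affine subspaces of $\t^*$. On the other hand, $\mu_0 + \z_0$ is itself an irreducible affine variety (it is a translate of a vector subspace, hence isomorphic as a variety to $\z_0$). A standard argument then shows that an irreducible variety contained in a finite union of closed subsets must be contained in one of them. Therefore there exist $1 \le i_0 \le m$ and $w_0 \in W$ such that
\[
\mu_0 + \z_0 \;\sub\; (w_0\bullet\mu_{i_0}) + w_0(\z_{i_0}).
\]

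Finally, from this affine-subspace containment I would deduce the claimed linear-subspace containment: for any $z_1, z_2 \in \z_0$, the elements $\mu_0+z_1$ and $\mu_0+z_2$ both lie in $(w_0\bullet\mu_{i_0}) + w_0(\z_{i_0})$, so their difference $z_1 - z_2$ lies in $w_0(\z_{i_0})$. Taking $z_2 = 0$ (which is allowed since $\z_0$ is a vector subspace) yields $\z_0 \sub w_0(\z_{i_0})$, as required.

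There is no substantive obstacle here; the only point that requires a moment of thought is recognising that the $W$-quotient can be removed by passing to the union over the finite group $W$, after which the argument reduces to the elementary observation that an irreducible variety cannot be covered by finitely many proper closed subsets.
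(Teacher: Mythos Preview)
Your proof is correct and follows essentially the same approach as the paper: pull back to $\t^*$ by taking the union over $W$, use irreducibility of $\mu_0+\z_0$ to land inside a single $w_0\bullet(\mu_{i_0}+\z_{i_0}) = (w_0\bullet\mu_{i_0}) + w_0(\z_{i_0})$, and then use $0\in\z_0$ to strip off the translation. The only cosmetic difference is that the paper subtracts $\mu_0$ directly rather than taking differences of two elements, but this is the same argument.
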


\begin{proof}
	The inclusion $(\mu_0+\z_0)/(W,\bullet)\sub \bigcup_{i=1}^{m} (\mu_i+\z_i)/(W,\bullet)$ implies that
	$$
	\mu_0+\z_0\sub \bigcup_{i=1}^{m} \bigcup_{w\in W} w\bullet(\mu_i+\z_i).
	$$
	The right hand side is an affine variety with irreducible components of the form
	$w\bullet(\mu_i+\z_i)$, where $w\in W$ and $i\in \{1,...,m\}$. Since $\mu_0 + \z_0$
	is an irreducible subvariety there exist $1\leq i_0\leq m$ and $w_0\in W$ such that
	$\mu_0+\z_0\sub w_0\bullet(\mu_{i_0}+\z_{i_0}).$ Now, $\z_0\sub w_0\bullet\mu_{i_0}-\mu_0+w_0(\z_{i_0})$
	and so $0\in\z_0$ implies $w_0\bullet\mu_{i_0}-\mu_0+w_0(\z_{i_0})=w_0(\z_{i_0})$. The result follows.
\end{proof}

\subsection{Main proof}

We are now in a position to proceed to the main steps in the proof of Theorem~\ref{T:surj}. From now on, we recall the refined set-up of rigid induction data from \S\ref{S:Walg} and our notation is consistent with that section. In particular, we label our rigid induction data by $[\g_i,\O_i]$, $i=0,\ldots,m$, rather than by $[\sp_{\bi,2n_{\bi}},\{0\}\times\O_0]$ with $\bi$ a maximal admissible inc-sequence and $\O_0$ a rigid nilpotent orbit in $\sp_{2n_\bi}$. The reader should nonetheless recall that our chosen representative of each  $[\g_i,\O_i]$ is selected to be of the form $(\sp_{\bi,2n_{\bi}},\{0\}\times\O_0)$ so that we may use the results of \S\ref{ss:PrelObs}.

\begin{prop} \label{P:Int}
	For $i =0,...,m$, we have $\Phi(\cE(\g_i, e_i)) \nsubseteq \bigcup_{j\ne i} \Phi(\cE(\g_j, e_j))$.
\end{prop}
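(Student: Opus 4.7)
The plan is to argue by contradiction: a containment $\Phi_i(\cE(\g_i,e_i))\sub\bigcup_{j\ne i}\Phi_j(\cE(\g_j,e_j))$ would force some conjugate of $\g_{j_0}$ (with $j_0\ne i$) to lie inside $\g_i$, and Theorem~\ref{T:nonconj} would then give a contradiction.

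The first step is to transport the assumed containment from subsets of $\cE(\g,e)$ to subsets of $\t^*/(W,\bullet)$. Applying $\sS$ and then $\ch$ and using the commutativity of the diagram \eqref{e:diag} together with Lemma~\ref{L:lemmawithsomename} (which computes $\ch\circ\Ind_{\p_j}^\g\circ\sS_j$ in terms of $\mu_j$ and the affine parameter $\lambda\in\z(\g_j)^*$), the assumed containment becomes
$$(\mu_i+\z(\g_i)^*)/(W,\bullet)\ \sub\ \bigcup_{j\ne i}(\mu_j+\z(\g_j)^*)/(W,\bullet),$$
where each $\z(\g_j)^*$ is embedded in $\t^*$ as the annihilator of $\t\cap[\g_j,\g_j]$. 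Lemma~\ref{GroupAct} then produces $j_0\ne i$ and $w_0\in W$ with $\z(\g_i)^*\sub w_0(\z(\g_{j_0})^*)$.

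Next, I would transfer this inclusion of subspaces from $\t^*$ to $\t$ via the nondegenerate invariant form. Since $\z(\g_j)=\t^{W_j}$ and the form is $W_j$-invariant, the decomposition $\t=\z(\g_j)\oplus(\t\cap[\g_j,\g_j])$ is orthogonal, so under the identification $\t^*\cong\t$ the copy of $\z(\g_j)^*$ embedded by extension by zero corresponds exactly to $\z(\g_j)\sub\t$; moreover the linear $W$-action on $\t^*$ becomes the adjoint action on $\t$. Thus we obtain $\z(\g_i)\sub w_0\cdot\z(\g_{j_0})$ in $\t$. Taking centralizers in $\g$ and using the standard fact $\g^{\z(\h)}=\h$ recalled in \S\ref{ss:paralevi}, this gives $\dot w_0\g_{j_0}\dot w_0^{-1}\sub \g_i$ for any lift $\dot w_0\in N_G(T)$ of $w_0$.

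To conclude, note that $(\dot w_0\g_{j_0}\dot w_0^{-1},\dot w_0\cdot\O_{j_0})$ is a representative of the rigid induction datum $[\g_{j_0},\O_{j_0}]$ for $\O$. Theorem~\ref{T:nonconj} applied to this representative together with $(\g_i,\O_i)$ then forces $\dot w_0\g_{j_0}\dot w_0^{-1}=\g_i$ and $\dot w_0\cdot\O_{j_0}=\O_i$, i.e.\ $[\g_{j_0},\O_{j_0}]=[\g_i,\O_i]$, contradicting $j_0\ne i$. The main obstacle is the careful bookkeeping of the identification $\t^*\cong\t$ and of how the embeddings of the various $\z(\g_j)^*$ into $\t^*$ interact with the $W$-action; once that is handled, the proposition follows cleanly from Lemma~\ref{GroupAct}, the central-character computation in Lemma~\ref{L:lemmawithsomename}, and the combinatorial rigidity result Theorem~\ref{T:nonconj}.
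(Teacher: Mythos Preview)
Your proposal is correct and follows essentially the same route as the paper's proof: both pass from the assumed containment in $\cE(\g,e)$ to a containment of affine subspaces in $\t^*/(W,\bullet)$ via the commutative diagram~\eqref{e:diag} and Lemma~\ref{L:lemmawithsomename}, then invoke Lemma~\ref{GroupAct} to obtain an inclusion of the form $\z_i\sub w_0(\z_{j_0})$, and finally derive from this a conjugate inclusion of Levi subalgebras that contradicts Theorem~\ref{T:nonconj}. The only difference is that you spell out in more detail the passage from the inclusion in $\t^*$ to the inclusion $\dot w_0\g_{j_0}\dot w_0^{-1}\sub\g_i$ via the form and centralizers, whereas the paper simply asserts this equivalence in one line.
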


\begin{proof}
	
	For each $i$, denote by $W_i$ the Weyl group associated to $(G_i,T)$. Proceeding as in \S\ref{ss:PrelObs}, we find $\eta_i\in\t^*$ and write $\cE(\g_i,e_i)=\{\C_{\eta_i}^\lambda\mid \lambda \in \z(\g_i)^*\}$. We pick $\mu_i\in \t^*$ as in \S\ref{ss:PrelObs} so that $\ch_i(I_{\eta_i}^0)=W_i\bullet\mu_i$ (where $\ch_i$ denotes the obvious central character map); as before, $\ch_i(I_{\eta_i}^\lambda)=W_i\bullet(\mu_i+\lambda)$ for $\lambda\in \z(\g_i)^*$.
	
	Suppose now for a contradiction that $\Phi(\cE(\g_i, e_i)) \sub \bigcup_{j\ne i} \Phi(\cE(\g_j, e_j)$.
	Since Diagram \eqref{e:diag} commutes we deduce that
	$$\ch \Ind_{\p_i}^{\g}\sS_i (\cE(\g_i,e_i))\subseteq \bigcup_{j\neq i} \ch \Ind_{\p_j}^{\g}\sS_j (\cE(\g_j,e_j))\subseteq \t^*/(W,\bullet).$$
	
	Using the notation $\z_k := \Ann_{\t^*} ([\g_k, \g_k])$ (which we easily identify with $\z(\g_k)^*$) and using
	Lemma~\ref{L:lemmawithsomename} we deduce that
	$$
	(\mu_i + \z_i)/(W,\bullet) \sub \bigcup_{j\ne i} (\mu_j + \z_j)/(W,\bullet).
	$$
	By Lemma~\ref{GroupAct} we deduce that $\z_i$ is $W$-conjugate
	to a subspace of $\z_j$ for some $j\neq i$. This is equivalent to saying that $\z(\g_i)$ is $W$-conjugate
	to a subspace of $\z(\g_j)$ for some $j\neq i$ which, in turn, implies that $\g_j$ is $G$-conjugate to a subset
	of $\g_i$. This contradicts Theorem~\ref{T:nonconj}, and the contradiction completes the proof.	
\end{proof}

Recalling from \S\ref{ss:sheets} that sheets of $\g$ containing $\O$ are in bijection with the rigid induction data $[\g_i,\O_i]$, $i=0,\ldots,m$, let us denote by $\cS_i$ the sheet corresponding to $[\g_i,\O_i]$. As in \S\ref{ss:sheets}, we can write $\cS_i=\bar{\cD}^{\reg}(\g_i, \O_i)$.

Now, recall the $\sl_2$-triple $(e,h,f)$ from the definition of the finite $W$-algebra $U(\g,e)$. The Katsylo variety is defined as
\begin{equation*}
	e + X := (e + \g^f) \cap \bigcup_{i=0}^m \cS_i.
\end{equation*}
Key to the proof of Theorem~\ref{T:surj} is the following result on the irreducible components of $e+X$; given a variety $Y$, we denote by $\Comp Y$ the set of irreducible components of $Y$.

\begin{prop} \label{P:dimpresbij}
	There are dimension preserving bijections
	$$
	\Comp \cE(\g,e) \isoto \Comp(e+X)
	\isoto \Comp \bigsqcup_{i=0}^m \cE(\g_i, e_i).
	$$
\end{prop}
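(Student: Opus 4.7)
The plan is to show that all three spaces have the same multiset of irreducible component dimensions, namely $\{\dim\z(\g_i) : 0 \le i \le m\}$, and that the natural sheet indexings yield the claimed dimension-preserving bijections.

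For $\bigsqcup_{i=0}^m \cE(\g_i, e_i)$: by \eqref{e:FWAdecompAb2}, each $\cE(\g_i, e_i) \cong \Spec_{\mathrm{max}} S(\z(\g_i))$ is an affine space of dimension $\dim\z(\g_i)$, hence irreducible. Therefore the disjoint union has exactly $m+1$ irreducible components indexed by $i$, with the required dimensions.

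For $e+X$, I would appeal to Katsylo's theorem on the transverse Slodowy slice, combined with the dimension formula $\dim\cS_i = \dim\z(\g_i) + \dim\O$ from the end of \S\ref{ss:sheets}, to obtain that $(e+\g^f) \cap \cS_i$ is irreducible of dimension $\dim\z(\g_i)$ for each $i$. Since the sheets $\cS_i$ are pairwise distinct (being parameterized by the distinct rigid induction data) and a sheet can be recovered from its intersection with the Slodowy slice via $G$-saturation, the subvarieties $(e+\g^f)\cap \cS_i$ are pairwise incomparable irreducible closed subsets of $e+X = \bigcup_{i=0}^m (e+\g^f)\cap\cS_i$, so they constitute precisely its $m+1$ irreducible components. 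The right-hand bijection is then the natural matching $(e+\g^f)\cap\cS_i \leftrightarrow \cE(\g_i,e_i)$, which is dimension-preserving by construction.

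The main obstacle will be controlling the components of $\cE(\g,e)$ a priori. For this I would invoke the structural theorem \cite[Theorem~1]{PT}, which uses the Kazhdan filtration on $U(\g,e)$ to identify $\gr U(\g,e)^{\ab}$ (at least as a reduced scheme) with the coordinate ring $\C[e+X]$. Standard facts about filtered algebras -- namely that passage to the associated graded preserves the number and dimensions of irreducible components of the reduced scheme -- then transfer the component structure from $e+X$ to $\cE(\g,e) = \Spec_{\mathrm{max}} U(\g,e)^{\ab}$, yielding $m+1$ irreducible components with the required dimensions. The left-hand bijection is thus sheet-indexed and automatically dimension-preserving, completing the proof.
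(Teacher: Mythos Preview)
Your treatment of the second bijection and of $\bigsqcup_i \cE(\g_i,e_i)$ is essentially the same as the paper's: each $(e+\g^f)\cap\cS_i$ is irreducible of dimension $\dim\z(\g_i)$ (the paper cites \cite{Im} for irreducibility and dimension, you invoke Katsylo), no two of these are comparable since $G$-saturation recovers the distinct sheets, and each $\cE(\g_i,e_i)$ is an affine space of the same dimension by \eqref{e:FWAdecompAb2}.

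The gap is in your argument for the first bijection. First, \cite[Theorem~1]{PT} does not identify $(\gr U(\g,e)^{\ab})_{\mathrm{red}}$ with $\C[e+X]$; what filtration considerations give you is only a closed immersion of $\Spec(\gr U(\g,e)^{\ab})_{\mathrm{red}}$ into $e+X$, since the Poisson-commutator ideal of $\C[e+\g^f]$ is merely \emph{contained in} $\gr$ of the commutator ideal of $U(\g,e)$. More seriously, your ``standard fact'' that passage to the associated graded preserves the number and dimensions of irreducible components is false. For a filtered commutative algebra $A$, the flat degeneration from $\Spec A$ to $\Spec\gr A$ can \emph{increase} the number of components: take $A=\C[x,y]/(xy-1)$ with the degree filtration, whose spectrum is irreducible, while $\gr A=\C[x,y]/(xy)$ has two components. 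So at best this route yields $|\Comp\cE(\g,e)| \le |\Comp(e+X)|$, not equality.

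The paper obtains the first bijection by directly citing \cite[Theorem~1.1]{To}, which is the main result of that paper and is genuinely hard; see Remark~\ref{R:maphistory} for a sketch. Premet's work \cite{PrCQ} constructs a map $\Comp\cE(\g,e)\to\Comp(e+X)$ via reduction modulo $p$, and \cite{To} supplies bijectivity by first proving (via an explicit Poisson presentation obtained through Dirac reduction) that $\C[e+\g^f]^{\ab}$ is reduced, and then using a deformation argument to bound the number of components of $\cE(\g,e)$. None of this is a formal consequence of filtered-algebra generalities.
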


\begin{proof}
	The first bijection is given in \cite[Theorem~1.1]{To}, see Remark~\ref{R:maphistory}.
	
	To establish the second bijection we first observe the following three facts about $e+X$ which allow us to describe $\Comp(e+X)$:
	\begin{enumerate}
		\item[(i)] The intersections $(e + \g^f) \cap \cS_i$ are all irreducible, thanks to
		\cite[Theorem~6.2]{Im}.
		\item[(ii)] There are no inclusions between the varieties $\{(e + \g^f) \cap \cS_i \mid i=0,...,m\}$;
		this can be immediately deduced from the fact that $G\cdot ((e + \g^f) \cap \cS_i) = \cS_i$ by \cite[Lemma 2.8(i)]{Im} and that, by definition, no sheet
		can admit an inclusion into another.
		\item[(iii)] The dimension of $(e + \g^f) \cap \cS_i$ is $\dim \z(\g_i)$ (see \cite[Lemma~2.8(i)]{Im}, for example).
	\end{enumerate}
	The result then follows from the observation in \eqref{e:FWAdecompAb2} that, for $G=\Sp_{2n}$, each $\cE(\g_i,e_i)$ is an affine space of dimension $\dim \z(\g_i)$.
\end{proof}
\begin{rmk}
	\label{R:maphistory}
	In \cite[Theorem~1.2]{PrCQ} Premet used reduction modulo $p$ to establish the existence of a map $\Comp \cE(\g,e) \to \Comp(e+X)$ which restricts to a dimension preserving bijection on a subset of the domain. The fact that this map is a bijection for $\g$ classical was the main result of \cite{To}. We summarise the proof in {\it op. cit.} for the interested reader. 
	
	In part I of {\it op. cit.}, Dirac reduction was applied to obtain a presentation of the Poisson algebra $\C[e+\g^f]$ for distinguished nilpotent elements in types {\sf B}, {\sf C}, {\sf D}, leveraging the Yangian presentation in type {\sf A} given by Brundan--Kleshchev.
	
	Using information on the structure of sheets of classical Lie algebras and the Poisson presentation, it was then shown (Theorem 8.8 of {\it op. cit.}) that the abelian quotient $\C[e+\g^f]^{\ab}$ is reduced. This statement was proven for all nilpotent elements, using the distinguished case, by an analytic argument (Theorem 8.9 of {\it op. cit.}). Finally, a deformation argument was used to show that $\cE(\g,e)$ cannot have more irreducible components than $\cE(\g,e)$.
\end{rmk}

\begin{proof}[Proof of Theorem~\ref{T:surj}] 
	
	When $G$ is of type ${\sf A}$ this is Corollary~\ref{C:typeA}. For $G$ of type ${\sf B}$, ${\sf C}$ or ${\sf D}$ we reduce to $G=\Sp_{2n}$ or $\SO_N$ and use Proposition~\ref{P:Int} and Proposition~\ref{P:dimpresbij} to argue as follows. 
	
	We know that $\Phi$ as in \eqref{e:diag} is a finite morphism. Thus, its image
	$\Phi \left(\bigsqcup_i \cE(\g_i, e_i)\right) \sub \cE(\g, e)$ is closed and for any irreducible
	component $Y$ of $\bigsqcup_i \cE(\g_i, e_i)$ we have $\dim \Phi(Y) = \dim Y$.
	Thanks to Proposition~\ref{P:Int} and Proposition~\ref{P:dimpresbij} we see that
	the multiset of dimensions of irreducible components of
	$\Phi \left(\bigsqcup_i \cE(\g_i, e_i)\right)$ is equal to the multiset of dimensions of the irreducible components of $\cE(\g, e)$. This
	implies that $\Phi$ is surjective, completing the proof of Theorem~\ref{T:surj}.
\end{proof}

\begin{proof}[Proof of Theorem~\ref{T:maintheorem}] 
	This follows directly from Theorem~\ref{T:surj} using the maps $\sS$ and $\sS_{\rig}$ (which are surjective onto the sets of Losev-Premet ideals with the appropriate associated varieties) and the commutativity of Diagram~\eqref{e:diag}.
\end{proof}


\begin{thebibliography}{LMM21}
	
	
	\bibitem[Bo76]{BoB2} W.~Borho,
	{\em Primitive vollprime Ideale in der Enh\"ullenden von $\so(5,\C)$}, J.\ Algebra\ {\bf 43} (1976), 619--654.
	
	\bibitem[Bo81]{Bo}  \bysame,
	{\em {\"U}ber Schichten halbeinfacher Lie-Algebren},
	Invent.\ Math.\ {\bf 65}, (1981), 283--317.
	
	\bibitem[BJ01]{BJ} W.~Borho and A.~Joseph,
	{\em Sheets and topology of primitive spectra for semisimple Lie algebras},
	J.\ Algebra {\bf 244} (2001), 76--167.
	
	\bibitem[BK79]{BK} W. Borho and H. Kraft,
	{\em {\"U}ber Bahnen und deren Deformationen bei linearen Aktionen reduktiver Gruppen},
	Comment.\ Math.\ Helv.\ {\bf 54} (1979), 61--104.
	
	
	\bibitem[BG18]{BG} J.~Brown and S.~M.~Goodwin,
	{\em On the variety of 1-dimensional representations of finite $W$-algebras in low rank},
	J.\ Algebra {\bf 511} (2018), 499--515.
	
	\bibitem[CM93]{CM} D.H.~Collingwood and W.~McGovern,
	{\em Nilpotent orbits in semisimple Lie algebras},  Van Nostrand Reinhold, New York, 1993.
	
	\bibitem[Co74]{Co} N.~Conze,
	{\em Alg{\`e}bres d'op{\'e}rateurs diff{\'e}rentiels et quotients des alg{\`e}bres enveloppantes},
	Bull.\ Soc.\ Math.\ France, {\bf 102} (1974), 379--415.
	
	\bibitem[EdG09]{EdG} W.~A.~de~Graaf and A.~Elashvili,
	{\em Induced nilpotent orbits of the simple Lie algebras of exceptional type},
	Georgian Math.\ J.\ {\bf 2} (2009), 257--278.
	
	
	\bibitem[Hu08]{HuBGG} J.~E.~Humphreys,
	{\em Representations of Semisimple Lie algebras in the BGG category $\mathcal O$},
	Graduate Studies in Mathematics {\bf 94}, American Mathematical Society, 2008.
	
	\bibitem[GG]{GG} W.L.~Gan, V.~Ginzburg, {\em Quantization of Slodowy slices}, Int.\ Math.\ Res.\ Not.\ {\bf 5} (2002), 243--255.
	
	\bibitem[IH05]{Im} A.~Im Hof,
	{\em The Sheets of a Classical Lie Algebra},
	PhD thesis, Universit\"at Basel, 2005,
	available at {\tt http://edoc.unibas.ch}
	
	\bibitem[Jo80]{JoGR} A.~Joseph,
	{\em Goldie rank in the enveloping algebra of a semisimple Lie algebra, II.} J.\ Algebra \ {\bf 65} (1980),
	284--306.
	
	\bibitem[Jo83]{JoCl}  \bysame,
	{\em On the Classification of Primitive Ideals in the Enveloping Algebra of a Semisimple Lie Algebra: Lie Group Representations I}, Springer Lecture Notes {\bf 1024}, 1983.
	
	\bibitem[Jo84]{JoICM} \bysame 
	{\em Primitive ideals in envloping algebras},
	Proceedings of the International Congress
	of Mathematicians, Vol. 1,2 (Warsaw, 1983), PWN, Warsaw, 1984, 403--414.
	
	\bibitem[Jo85]{JoAV}  \bysame, 
	{\em On the associated variety of a primitive ideal}, J. Algebra {\bf 93} (1985), 509--523.
	
	\bibitem[Lo10a]{LoQS}  I.~Losev,
	{\em Quantized symplectic actions and $W$-algebras},
	J.\ Amer.\ Math.\ Soc.\ {\bf 23} (2010), 35--59.
	
	\bibitem[Lo10b]{LoICM} \bysame, 
	{\em Finite $W$-algebras},
	Proceedings of the International Congress of Mathematicians. Volume III (Hyperabad 2010), Hindustan Book Agency, New Delhi, 2010, 1281--1307.
	
	\bibitem[Lo11a]{LoPI} \bysame, 
	{\em 1-dimensional representations and parabolic induction for $W$-algebras},
	Adv.\ Math.\ {\bf 226} (2011), 4841--4883.
	
	\bibitem[Lo11b]{LoDuke} \bysame, {\em Finite-dimensional representations of $W$-algebras}, Duke Math.\ J.\ {\bf 159} (2011), 99--143.
	
	\bibitem[Lo15]{LoGR} \bysame, 
	{\em Dimensions of irreducible modules over $W$-algebras and Goldie ranks},
	Invent.\ Math.\ {\bf 200} (2015), 849--923.
	
	\bibitem[Lo18]{LoQuant} \bysame, {\em Quantizations of regular functions on nilpotent orbits}, Bull.\ Inst.\ Math.\ Acad.\  Sin.\ (N.S.)\ {\bf 13} (2018), 199--225.
	
	\bibitem[Lo22]{LoOM} \bysame, 
	{\em Deformations of symplectic singularities and orbit method for semisimple Lie algebras},
	Selecta Math.\ {\bf 28} (2022), Art.\ no.\ 30, 52pp.
	
	\bibitem[LMM21]{LMM} I.~Losev, D. Matvieievskyi and L. Mason-Brown,
	{\em Unipotent ideals and Harish-Chandra bimodules}, \arxiv{2108.03453}, 2021.
	
	\bibitem[LP21]{LP21} I. Losev, I. Panin,
	{\em Goldie ranks of primitive ideals and indexes of equivariant Azumaya algebras},
	Mosc.\ Math.\ J.\ {\bf 21} (2021), 383--399.
	
	\bibitem[LS79]{LS} G. Lusztig and N. Spaltenstein,
	{\em Induced unipotent classes},
	J.\ London Math.\ Soc.\  {\bf 19} (1979), 41--52.
	
	\bibitem[M{\oe}87]{Moe} C.~M{\oe}glin,
	{\em Id\'eaux compl\`etement premiers de l’alg`ebre enveloppante de $\gl_n(\C)$},
	J. Algebra \ {\bf 106} (1987) 287–366.
	
	\bibitem[Pr02]{PrST} A.~Premet,
	{\em Special transverse slices and their enveloping algebras},
	Adv.\ Math.\ {\bf 170}\,(2002), 1--55.
	(with an Appendix by S.~Skryabin).
	
	\bibitem[Pr05]{PrJo} \bysame, {\em Enveloping algebras of Slodowy slices and the Joseph ideal}, J.\ Eur.\ Math.\ Soc.\ {\bf 9} (2007), 487--543.
	
	\bibitem[Pr10]{PrCQ}
	\bysame, {\em Commutative quotients of finite
		$W$-algebras},
	Adv. Math., {\bf 225} (2010), 269--306.
	
	\bibitem[Pr11]{Pr11} \bysame,
	{\em Enveloping algebras of Slodowy slices and Goldie rank},
	Transform.\ Groups {\bf 16} (2011), 857--888.
	
	\bibitem[Pr14]{PrMF} \bysame, 
	{\em Multiplicity-free primitive ideals associated with rigid nilpotent orbits},
	Transform.\ Groups {\bf 19} (2014), 569--641.
	
	
	\bibitem[PS22]{PS} A.~Premet and D.~I.~Stewart,
	{\em The number of multiplicity-free primitive ideals associated with the rigid nilpotent orbits}, Pure Appl.\ Math.\ Q.\ {\bf 20} (2024), 537--563.
	
	\bibitem[PT14]{PT} A.~Premet and L.~Topley,
	{\em Derived subalgebras of centralisers and finite $W$-algebras},
	Compos.\ Math.\ {\bf 150} (2014), 1485--1548.
	
	\bibitem[To23]{To}  L.~Topley,
	{\em One dimensional representations of finite $W$-algebras, Dirac reduction and the orbit method},
	Invent. \ Math. {\bf 234} (2023),  1039--1107.
	
	
\end{thebibliography}
\end{document}